\definecolor{lbcolor}{rgb}{0.9,0.9,0.9}
\numberwithin{equation}{section}
\theoremstyle{plain}
\newtheorem{lemma}[equation]{Lemma}
\newtheorem{theorem}[equation]{Theorem}
\newtheorem{proposition}[equation]{Proposition}
\theoremstyle{definition}
\newtheorem{definition}[equation]{Definition}
\newtheorem{remark}[equation]{Remark}
\newtheorem{example}[equation]{Example}
\title{Absolute incidence theorems and tilings}
\author[L. K\"uhne]{Lukas K\"uhne}
\address{Fakult\"at f\"ur Mathematik, Universit\"at Bielefeld, Bielefeld, Germany}
\email{\url{lkuehne@math.uni-bielefeld.de}}
\author[M. Larson]{Matt Larson}
\address{Institute for Advanced Study and Princeton University, NJ, USA}
\email{\url{mattlarson@princeton.edu}}
\begin{document}

\begin{abstract}
We give a precise definition of incidence theorems in plane projective geometry and introduce the notion of ``absolute incidence theorems,'' which hold over any \emph{ring}. 
Fomin and Pylyavskyy describe how to obtain incidence theorems from tilings of an orientable surface; they call this result  the ``master theorem''.
Instances of the master theorem are always absolute incidence theorems.
As most classically known incidence theorems are instances of the master theorem, they are absolute incidence theorems.
We give an explicit example of an incidence theorem involving 13 points that is not an absolute incidence theorem, and therefore is not an instance of the master theorem.
\end{abstract}

\maketitle

\vspace{-3mm}
\section{Introduction}

Incidence theorems in the projective plane are a classically studied subject, dating back to the ancient Greeks. In this paper, we study different types of incidence theorems. We begin by giving a precise definition of incidence theorems, in a sense that would be recognizable to Euclid. 

\begin{definition}\label{def:incidence}
An incidence theorem is a set $\{1, \dotsc, n\}$, a collection of \emph{nondegeneracy} conditions consisting of pairs and triples of elements of $\{1, \dotsc, n\}$, a collection of \emph{collinearity} conditions consisting of triples of elements of $\{1, \dotsc, n\}$, and an additional triple  $\{a, b, c\}$ called the \emph{conclusion}, satisfying the following condition: for every field $\mathbf{k}$ and points $p_1, \dotsc, p_n$ in $\mathbb{P}^2(\mathbf{k})$ such that
\begin{enumerate}
\item if $\{i, j\}$ is a nondegeneracy condition, then $p_i \not= p_j$, 
\item if $\{i, j, k\}$ is a nondegeneracy condition, then $p_i, p_j,$ and $p_k$ do not lie on a line, and
\item if $\{i, j, k\}$ is a collinearity condition, then $p_i, p_j,$ and $p_k$ lie on a line,
\end{enumerate} 
then $p_a, p_b$, and $p_c$ lie on a line. 
\end{definition}

Often, one assumes that all points are distinct. One sometimes considers incidence theorems which are valid over a particular field $\mathbf{k}$, meaning that one requires the above conditions only for points in $\mathbb{P}^2(\mathbf{k})$. For example, the ancient Greeks were interested in the case $\mathbf{k} = \mathbb{R}$. 
We can describe Pappus's theorem in this way.

\begin{example}
	Consider the configuration of points in~\Cref{fig:pappus}.
	This figure describes an incidence theorem in the sense of~\Cref{def:incidence}:
	\begin{enumerate}
		\item Assume that all pairs of points are distinct, that is, all pairs $\{i,j\}$ are nondegenerate.
		\item  The lines shown in~\Cref{fig:pappus} encode the collinearity conditions:
		\[
			\{1,2,3\},\, \{1,5,7\},\,\{1,6,8\},\,\{2,4,7\},\,\{2,6,9\},\,\{3,4,8\},\,\{3,5,9\},\,\{4,5,6\}.
		\]
		\item Lastly, we assume that all triples apart from the triple $\{7,8,9\}$  and these collinearity conditions are nondegenerate.
	\end{enumerate}
	Pappus's theorem now states that, in this situation, the additional incidence $\{7,8,9\}$ also holds, that is, for every choice of points $p_1,\dots,p_9$ in $\mathbb{P}^2(\mathbf{k})$ satisfying these conditions,  the points $p_7$, $p_8$ and $p_9$ are collinear.
	\begin{center}
		\begin{figure}[hbt]
		\begin{tikzpicture}[scale=.7]
			
			\coordinate (A) at (0,0);
			\coordinate (B) at (2,0);
			\coordinate (C) at (4,0);
			
			\coordinate (D) at (1,2);
			\coordinate (E) at (2.5,2);
			\coordinate (F) at (5,2);
			
			\draw[thick] (A) -- (C); 
			\draw[thick] (D) -- (F);
			
			\draw[thick] (A) -- (E);
			\draw[thick] (A) -- (F);
			\draw[thick] (B) -- (D);
			\draw[thick] (B) -- (F);
			\draw[thick] (C) -- (D);
			\draw[thick] (C) -- (E);
			
			\coordinate (P) at (intersection of A--E and B--D);
			\coordinate (Q) at (intersection of A--F and C--D);
			\coordinate (R) at (intersection of B--F and C--E);
			
			\fill[black] (P) circle (1.5pt) node[label={[xshift=-0.3cm, yshift=-0.38cm]$7$}] {};
			\fill[black] (Q) circle (1.5pt) node[above] {$8$};
			\fill[black] (R) circle (1.5pt) node[label={[xshift=0.3cm, yshift=-0.38cm]$9$}] {};
			
			\fill[black] (A) circle (1.5pt) node[below] {$1$};
			\fill[black] (B) circle (1.5pt) node[below] {$2$};
			\fill[black] (C) circle (1.5pt) node[below] {$3$};
			
			\fill[black] (D) circle (1.5pt) node[above] {$4$};
			\fill[black] (E) circle (1.5pt) node[above] {$5$};
			\fill[black] (F) circle (1.5pt) node[above] {$6$};
			
		\end{tikzpicture}
		\caption{The configuration of points and lines in Pappus's theorem.}
		\label{fig:pappus}
			\end{figure}
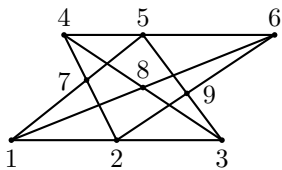
	\end{center}
\end{example}

We will also be interested in a stronger form of incidence theorem. For this, it will be convenient to recall that incidence theorems can be described algebraically. Given points $p_1, \dotsc, p_n$ in $\mathbb{P}^2(\mathbf{k})$, by choosing homogeneous coordinates, we can construct a $3 \times n$ matrix with entries in $\mathbf{k}$. Conversely, every $3 \times n$ matrix with no zero columns gives points $p_1, \dotsc, p_n$ in $\mathbb{P}^2(\mathbf{k})$. Any matrix that can be obtained by rescaling the columns gives the same points. 

In this framework, incidence theorems can be interpreted as algebraic statements. We have $p_i = p_j$ if and only if the $i$th column and the $j$th column are scalar multiples of each other, which is equivalent to the vanishing of the three $2 \times 2$ minors using the $i$th and $j$th columns of the matrix. Points $p_i, p_j$, and $p_k$ are collinear if and only if the corresponding $3 \times 3$ minor vanishes. Importantly, this perspective allows one to consider the validity of incidence theorems over commutative rings. 

\begin{definition}\label{def:absolute}
An incidence theorem with conclusion $\{a, b, c\}$ is an \emph{absolute incidence theorem} if, for every commutative ring $A$ and every $3 \times n$ matrix of elements of $A$ such that 
\begin{enumerate}
\item the ideal generated by the entries in each column is the unit ideal,
\item if $\{i, j\}$ is a nondegeneracy condition, then the ideal generated by the $2 \times 2$ minors using columns $i$ and $j$ is the unit ideal,
\item if $\{i, j, k\}$ is a nondegeneracy condition, then the determinant of the $3 \times 3$ minor with columns $i, j$, and $k$ is a unit, and
\item if $\{i, j, k\}$ is a collinearity condition, then the determinant of the $3 \times 3$ minor with columns $i, j$, and $k$ is $0$, 
\end{enumerate}
then the determinant of the $3 \times 3$ minor with columns $a, b$, and $c$ is $0$. 
\end{definition}

If $A$ is a field, then the conditions in Definition~\ref{def:absolute} reduce to the conditions in Definition~\ref{def:incidence}; the first condition is necessary because the vector $(0,0,0)$ does not define a point of projective space.  See Section~\ref{sec:absolute} for a discussion and basic properties of this definition.

We will show that most classically known incidence theorems are absolute incidence theorems. Our interest in incidence theorems comes from a recent paper of Fomin and Pylyavskyy \cite{FominPylyavskyy}, where they introduce a very general result which they call the ``master theorem,'' and they show that most or all classically known incidence theorems are instances of it. We will show that their techniques prove that these are absolute incidence theorems. 

To construct incidence theorems, Fomin and Pylyavskyy begin with a tiling of a closed orientable surface by quadrilaterals, where each vertex of the tiling is colored black or white, and each edge connects vertices of opposite colors. Choose a field $\mathbf{k}$, associate to each black vertex a point of $\mathbb{P}^2(\mathbf{k})$, and associate to each white vertex a line in $\mathbb{P}^2(\mathbf{k})$. We say that this tile is \emph{coherent} if neither point lies on either line, and either the points are the same, the lines are the same, or the intersection of the lines is contained in the line through the points. 

\begin{theorem}[Master theorem]\cite[Theorem 2.6]{FominPylyavskyy}\label{thm:master}
Suppose we have a tiling of an orientable surface and have associated a point to each black vertex and a line to each white vertex in $\mathbb{P}^2(\mathbf{k})$. If all but one of the tiles is coherent, then the last one is coherent as well. 
\end{theorem}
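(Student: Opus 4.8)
The plan is to set up the combinatorial data of the tiling carefully and then reduce the "master theorem" to a local computation at each vertex, packaged as a statement about a product of matrices around a cycle. Each tile is a quadrilateral whose four corners alternate in color, so it carries two black vertices (points) $p, p'$ and two white vertices (lines) $\ell, \ell'$. The coherence condition says that none of the incidences $p \in \ell$, $p \in \ell'$, $p' \in \ell$, $p' \in \ell'$ holds, and that the four objects satisfy a single algebraic relation: the intersection point $\ell \cap \ell'$ lies on the line $\overline{pp'}$ (with the degenerate cases $p = p'$ or $\ell = \ell'$ absorbed into this). I would first translate each of these geometric conditions into the vanishing of a single determinant or cross-ratio-type expression in the homogeneous coordinate vectors, so that coherence of a tile becomes an explicit polynomial equation $F_t = 0$ in the coordinates of its four incident vertices.

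Next I would exploit the orientable surface structure. Because the surface is closed and orientable and tiled by quadrilaterals, the tiles, edges, and vertices form a CW structure, and I would use the Euler-characteristic / global consistency to see that the product over all tiles of the local relations collapses. Concretely, the most promising approach is to attach to each \emph{corner} (a tile-vertex incidence) a scalar or $2\times 2$ matrix built from the coordinate vectors, such that the coherence relation $F_t = 0$ for a tile expresses a multiplicative compatibility around that tile, and such that going around each vertex (point or line) the contributions telescope. The key identity I expect to need is a Plücker-type three-term relation: for a black vertex $p$ surrounded by white neighbors $\ell_1, \dots, \ell_d$ in cyclic order, the products of the relevant brackets around $p$ multiply to $1$ (or to a quantity that cancels), and symmetrically for white vertices. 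Multiplying the coherence relations over all tiles and regrouping the factors by vertex, every factor should appear once with each orientation and cancel, leaving the relation that the final tile's expression $F_{t_0}$ equals a unit times the product of the others — hence $F_{t_0} = 0$.

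The main obstacle, and the step I would spend the most care on, is proving this cancellation rigorously rather than heuristically, because the bookkeeping depends delicately on the orientation and on handling the vanishing (nondegeneracy) denominators. Since coherence guarantees that none of the "point on line" incidences occurs, each bracket $\langle p, \ell \rangle$ is nonzero (a unit, in the ring version), so dividing by these is legitimate; I would phrase the whole argument in terms of ratios that are manifestly invariant under rescaling the homogeneous coordinates of each vertex, which is essential since points and lines are only defined up to scalar. The orientability is exactly what lets me assign consistent signs or a consistent cyclic order around each vertex so that the telescoping works; on a non-orientable surface the signs would obstruct the cancellation.

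Once the multiplicative identity over all tiles is established, the conclusion is immediate: the hypothesis that all tiles but one are coherent makes all but one factor vanish-free and equal to their prescribed value, and the single global relation forces the remaining factor, namely the expression detecting coherence of the last tile, to vanish as well. I would finally check that every manipulation used only additions, multiplications, and divisions by quantities that are units, so that the same argument proves the corresponding statement over an arbitrary commutative ring $A$ and thereby yields that each instance is an \emph{absolute} incidence theorem, as claimed in the abstract.
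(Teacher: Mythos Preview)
Your plan is essentially the paper's approach: the paper (proving the stronger absolute version, Theorem~\ref{thm:absolutemaster}) attaches to each edge $(P,\ell)$ the scalar $[s_\ell,t_\ell,P]$, shows via the identity $[s,t,P][v,w,Q]-[v,w,P][s,t,Q]=[P,Q,(s\times t)\times(v\times w)]$ (Lemma~\ref{lem:identity}, Proposition~\ref{prop:fundamental}) that coherence of a tile with corners $P,\ell_i,Q,\ell_j$ is exactly $[s_i,t_i,P][s_j,t_j,Q]=[s_j,t_j,P][s_i,t_i,Q]$, and then multiplies these relations over all tiles, using the orientation so that each edge-bracket appears once on each side and cancels as a unit.

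One small correction to your outline: the cancellation is organized \emph{by edge}, not by vertex, and needs no Euler-characteristic input or auxiliary Pl\"ucker relation at a vertex---the single tile identity above already makes coherence a multiplicative condition on the four edge-brackets, and since every edge lies in exactly two tiles with opposite induced orientation, the global product collapses immediately. Your ``telescoping around each vertex'' would amount to the same cancellation reindexed, but the edge viewpoint is the cleaner way to see it and avoids hunting for a vertex identity that is not actually required.
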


We now describe how to formulate a version of the master theorem in our framework. Given a tiling of a closed orientable surface by quadrilaterals, we choose an equivalence relation on the white vertices. We will associate a line to each equivalence class of white vertices, and we will require these lines to be distinct. 

Form the set which consists of the black vertices, two elements $s_i, t_i$ for each equivalence class of white vertices, and one element $R_k$ for each pair of distinct equivalence classes of white vertices which appear in a tile together. The nondegeneracy conditions are $\{s_i, t_i\}$ for each equivalence class of white vertices, $\{s_i, t_i, t_j\}, \{s_i, t_i, s_j\}, \{s_i, s_j, t_j\}$, and $\{t_i, s_i, s_j\}$ for each pair of equivalence classes of white vertices, and $\{s_i, t_i, P\}$ for each equivalence class of white vertices and black vertex $P$ which appear in a tile together. For each pair of distinct equivalence classes of white vertices which appear in a tile together, impose the collinearity conditions $\{s_i, t_i, R_k\}$ and $\{s_j, t_j, R_k\}$. 
To obtain an incidence theorem, choose one tile where the white vertices are not equivalent. For every other tile where the white vertices are not equivalent, impose the collinearity condition $\{P, Q, R_k\}$, where $P$ and $Q$ are the black vertices of the tile, and $R_k$ is the point constructed for the equivalence classes of white vertices in this tile. The conclusion is $\{P, Q, R_k\}$, where $P$ and $Q$ are the black vertices of the remaining tile where the white vertices are not equivalent, and $R_k$ is the point associated to this tile. We call this the incidence theorem generated by the tiling.

\begin{theorem}\label{thm:absolutemaster}
Given a tiling of an orientable surface by quadrilaterals and an equivalence relation on the white vertices, the incidence theorem generated by the tiling is an absolute incidence theorem. 
\end{theorem}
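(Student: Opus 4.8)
The plan is to reduce the theorem to a single polynomial identity that encodes the combinatorics of the tiling, and then to promote the field-theoretic content of the master theorem (\Cref{thm:master}) to an arbitrary commutative ring $A$ by keeping careful track of which quantities are units. Throughout, write $\mathbf{p}$ for the column attached to a black vertex $P$, let $\mathbf{s}_i, \mathbf{t}_i$ be the columns of $s_i, t_i$, and for each equivalence class $i$ of white vertices set $\ell_i = \mathbf{s}_i \times \mathbf{t}_i \in A^3$, the covector cutting out the associated line. For an element $X$ with column $\mathbf{x}$ and a class $i$, write $\langle X, i\rangle = \det[\mathbf{x}, \mathbf{s}_i, \mathbf{t}_i] = \mathbf{x}\cdot\ell_i$; this is exactly the $3\times 3$ minor in the nondegeneracy condition $\{s_i, t_i, X\}$, so it is a unit whenever $X$ and $i$ occur in a common tile. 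For a tile $T$ with black vertices $P, Q$ and white classes $i, j$, set $N(T) = \langle P, i\rangle\langle Q, j\rangle$ and $D(T) = \langle P, j\rangle\langle Q, i\rangle$.

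First I would establish the identity $\prod_T N(T) = \prod_T D(T)$, over all tiles, as an identity of polynomials in the matrix entries over $\Z$ (hence valid over every $A$). Fixing the orientation of the surface, each tile has boundary $P \to \ell \to Q \to m \to P$; I assign its two edges traversed from black to white (contributing $\langle P, i\rangle$ and $\langle Q, j\rangle$) to $N(T)$ and the two edges traversed from white to black to $D(T)$. Since the surface is closed and oriented, every edge is traversed once in each direction by its two incident tiles, so each edge-bracket $\langle P, [\ell]\rangle$ occurs exactly once in $\prod_T N(T)$ and exactly once in $\prod_T D(T)$, both products equalling $\prod_{\text{edges}}\langle P, [\ell]\rangle$. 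Tiles whose two white vertices are equivalent satisfy $N(T) = D(T)$ already as polynomials, so they are inert. This is the ring-theoretic avatar of the telescoping behind the master theorem.

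The step I expect to be the main obstacle is comparing $R_k$, which is only required to lie on both lines, with the ``honest'' intersection $\ell_i \times \ell_j$: over a field these agree up to a scalar, but over a ring this comparison requires the nondegeneracy units. Using the quadruple cross-product identity I would write
\[
\ell_i \times \ell_j = \langle t_j, i\rangle\, \mathbf{s}_j - \langle s_j, i\rangle\, \mathbf{t}_j ,
\]
whose coefficients $\langle t_j, i\rangle = \det[\mathbf{s}_i, \mathbf{t}_i, \mathbf{t}_j]$ and $\langle s_j, i\rangle = \det[\mathbf{s}_i, \mathbf{t}_i, \mathbf{s}_j]$ are units by the nondegeneracy conditions $\{s_i, t_i, t_j\}$ and $\{s_i, t_i, s_j\}$. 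Because the $2\times 2$ minors of $[\mathbf{s}_j, \mathbf{t}_j]$ generate the unit ideal (nondegeneracy of $\{s_j, t_j\}$), the columns $\mathbf{s}_j, \mathbf{t}_j$ span a rank-two free direct summand of $A^3$, and a combination of this basis with unit coefficients is unimodular; hence $\ell_i \times \ell_j$ is a unimodular vector. The map $(\ell_i, \ell_j)\colon A^3 \to A^2$ has $\ell_i \times \ell_j$ as its vector of maximal minors, so it is surjective and its kernel is a rank-one locally free direct summand containing both the unimodular vectors $\ell_i \times \ell_j$ and $\mathbf{r}_k$ (the latter unimodular by condition~(1) of \Cref{def:absolute}, and in the kernel by the collinearity conditions $\{s_i, t_i, R_k\}$ and $\{s_j, t_j, R_k\}$). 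Since a unimodular element of a rank-one locally free module is a generator, the two differ by a unit: $\ell_i \times \ell_j = u_T\, \mathbf{r}_k$ with $u_T \in A^\times$. Expanding $\det[\mathbf{p}, \mathbf{q}, \ell_i \times \ell_j] = N(T) - D(T)$ then yields $N(T) - D(T) = u_T \det[\mathbf{p}, \mathbf{q}, \mathbf{r}_k]$.

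Finally I would assemble the pieces. For every tile other than the distinguished one, the imposed collinearity $\{P, Q, R_k\}$ forces $\det[\mathbf{p}, \mathbf{q}, \mathbf{r}_k] = 0$, whence $N(T) = D(T)$ by the previous paragraph (this also holds automatically for tiles with equivalent white vertices). Every bracket-pairing occurring in the products is a unit, so cancelling the common unit factor $\prod_{T \neq T_0} N(T) = \prod_{T\neq T_0} D(T)$ from the polynomial identity $\prod_T N(T) = \prod_T D(T)$ gives $N(T_0) = D(T_0)$ for the distinguished tile $T_0$. Applying the comparison $N(T_0) - D(T_0) = u_{T_0}\det[\mathbf{p}, \mathbf{q}, \mathbf{r}_k]$ with $u_{T_0}$ a unit yields $\det[\mathbf{p}, \mathbf{q}, \mathbf{r}_k] = 0$, which is exactly the conclusion of the incidence theorem over $A$.
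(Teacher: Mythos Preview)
Your proof is correct and follows essentially the same route as the paper: the telescoping identity $\prod_T N(T)=\prod_T D(T)$ via oriented edge-counting, the comparison $N(T)-D(T)=u_T[P,Q,R_k]$ with $u_T$ a unit, and the final cancellation are exactly the paper's Proposition~\ref{prop:fundamental} and the proof of Theorem~\ref{thm:absolutemaster}. The only difference is cosmetic: where the paper extracts the unit $u_T$ by the elementary Lemmas~\ref{lem:cross}--\ref{lem:crossnotparallel}, you argue more abstractly via rank-one locally free kernels and unimodular generators, which is equivalent.
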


We illustrate the above setup in Example~\ref{ex:pappus}. The equivalence relation on the white vertices is necessary to formulate Theorem~\ref{thm:master} in our framework, as we cannot discuss lines directly. Instead, we construct two distinct points and think about the line that they determine. Our nondegeneracy conditions guarantee that these lines are distinct. 
In our framework, the basic identity underlying the master theorem (Proposition~\ref{prop:fundamental}) does not hold if the two lines appearing in the tile are the same, so we need to treat tiles where the two lines are the same in a different way. For this reason, one needs to specify in advance which lines are the same, which is why we need the equivalence relation on the white vertices. It can be beneficial to have a nontrivial equivalence relation, i.e., to identify some of the lines. See \cite[Remark 3.7]{FominPylyavskyy}. 

We prove Theorem~\ref{thm:absolutemaster} in Section~\ref{sec:master_theorem}. The proof relies on the same basic observation that Fomin and Pylyavskyy used to prove Theorem~\ref{thm:master}. However, there are a few technical obstacles that must be overcome, as we do not have access to tools from linear algebra because we work over arbitrary commutative rings.

We remark that a similar tiling-based approach to incidence theorems was introduced in \cite{RG}, see \cite{Baralic,RG2} for further work on this approach, and see \cite{PS} for more discussion and related approaches to incidence theorems.

Fomin and Pylyavskyy ask if every incidence theorem can be deduced from the master theorem. Pylyavskyy and Skopenkov gave an example of an incidence theorem which holds when $\mathbf{k} = \mathbb{C}$, but not when $\mathbf{k}$ has characteristic $2$ \cite[Example 3.1]{PS}. In particular, this result cannot be proven using the master theorem. We give an example of an incidence theorem in the sense of Definition~\ref{def:incidence} (so it holds over every field) which is not an absolute incidence theorem, and so it is not an instance of the master theorem.
The question of whether every absolute incidence theorem is an instance of the master theorem remains open.

\begin{theorem}\label{thm:incidencestrong}
Let $\mathbf{k}$ be a field, and suppose we have distinct points $p_1, \dotsc, p_{13}$ in the projective plane over $\mathbf{k}$.
Suppose that the following triples of points are collinear:
\begin{equation*}\begin{split}
&\{p_1, p_2, p_3\}, \{p_1, p_2, p_{13}\}, \{p_1, p_3, p_{13}\}, \{p_1, p_4, p_5\}, \{p_1, p_6, p_9\}, \{p_1, p_7, p_{10}\}, \{p_1, p_8, p_{12}\}, \{p_2, p_3, p_{13}\}, \\ 
&\{p_2, p_4, p_6\}, \{p_2, p_5, p_8\}, \{p_2, p_{10}, p_{11}\}, \{p_3, p_4, p_7\}, \{p_3, p_5, p_6\}, \{p_3, p_8, p_{10}\}, \{p_4, p_9, p_{10}\}, \{p_5, p_7, p_{11}\}, \\ 
&\{p_6, p_{7}, p_{13}\}, \{p_6, p_8, p_{11}\}, \{p_6, p_{10}, p_{12}\}, \{p_7, p_9, p_{12}\}. 
\end{split}\end{equation*}
Then $p_{11}, p_{12}$, and $p_{13}$ are collinear. Thus, this is an incidence theorem. However, it is not an absolute incidence theorem. 
\end{theorem}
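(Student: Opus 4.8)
The statement has two parts, and I would treat them separately. For the first part---that this is an incidence theorem over every field---I would set up projective coordinates adapted to the configuration. The conditions $\{1,2,3\}, \{1,2,13\}, \{1,3,13\}, \{2,3,13\}$ force $p_1, p_2, p_3, p_{13}$ onto a common line $\ell_0$; after normalizing $\ell_0 = \{z = 0\}$ and using $\mathrm{PGL}_3$ to fix $p_1 = [1:0:0]$, $p_2 = [0:1:0]$, $p_3 = [1:1:0]$, $p_4 = [0:0:1]$, and $p_5 = [1:0:1]$ (legitimate since $\{1,4,5\}$ only forces $p_5 \in \overline{p_1 p_4}$), the only remaining continuous freedom is the position $p_{13} = [t:1:0]$ on $\ell_0$ and one parameter $u$ recording the position of $p_8$ on $\overline{p_2 p_5}$. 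The remaining collinearities let me solve for the other points as explicit rational functions of $t$ and $u$: the points $p_6$ and $p_7$ are intersections of already-constructed lines, $p_{10} = \overline{p_1 p_7} \cap \overline{p_3 p_8}$, $p_9 = \overline{p_1 p_6} \cap \overline{p_4 p_{10}}$, $p_{11} = \overline{p_5 p_7} \cap \overline{p_2 p_{10}}$, and $p_{12} = \overline{p_1 p_8} \cap \overline{p_6 p_{10}}$. With all thirteen points written out, the two collinearity conditions not yet consumed by the construction, namely $\{6,8,11\}$ and $\{7,9,12\}$, together with the conclusion $\{11,12,13\}$, become three explicit polynomials in $t$ and $u$; I would finish the first part by checking, via a computer algebra system, that the conclusion polynomial vanishes whenever the other two do over every field (a finite Gröbner-basis computation). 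I expect the two hypothesis polynomials to share a common component on which $u$ is essentially determined by $t$, with the conclusion vanishing identically there.

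The substance of the theorem is the second assertion, that this incidence theorem is not absolute, and here the mechanism must be ring-theoretic rather than characteristic-dependent, since the first part holds over every field. My plan is to exhibit a concrete non-reduced commutative ring together with a $3 \times 13$ matrix realizing the hypotheses of \Cref{def:absolute} while the conclusion determinant is a nonzero nilpotent. The natural candidate is the ring of dual numbers $B = \mathbf{k}[\epsilon]/(\epsilon^2)$ over a suitable field $\mathbf{k}$. Concretely, I would choose an explicit field configuration $X_0$---a point of the configuration variety from the first part, with all required nondegeneracies strict---and seek a first-order deformation $X_0 + \epsilon X_1$ that preserves every collinearity minor modulo $\epsilon^2$ but moves the conclusion minor. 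Writing the collinearity minors as $f_1, \dotsc, f_{20}$ and the conclusion minor as $D$, this amounts to finding a tangent vector $X_1$ with $df_j|_{X_0}(X_1) = 0$ for all $j$ and $dD|_{X_0}(X_1) \neq 0$.

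Given such an $X_1$, the matrix $X_0 + \epsilon X_1$ over $B$ satisfies all four conditions of \Cref{def:absolute}: the collinearity minors vanish by the choice of $X_1$, and because $X_0$ is a genuine field configuration with distinct points and the relevant triples in general position, the nondegeneracy conditions---unit column ideals, unit ideals of pairwise $2 \times 2$ minors, and unit $3 \times 3$ minors---all persist over $B$, since a classical part that is nonzero or invertible at $\epsilon = 0$ remains so over the dual numbers. Yet the conclusion determinant equals $\epsilon\, dD|_{X_0}(X_1) \neq 0$, so this matrix witnesses that the theorem is not an absolute incidence theorem.

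The crux---and the step I expect to be hardest---is producing the obstructed deformation, that is, showing that $dD|_{X_0}$ does not lie in the span of the differentials $df_j|_{X_0}$. This is precisely the statement that $D$ lies in $\sqrt{I} \setminus I$, where $I$ is the ideal generated by the collinearity minors in the localization inverting the nondegeneracy minors: the first part of the theorem guarantees $D \in \sqrt{I}$, but absoluteness would require the stronger $D \in I$. After fixing $X_0$, establishing this non-containment reduces to a single (large) linear-algebra computation over $\mathbf{k}$---comparing the rank of the Jacobian of $f_1, \dotsc, f_{20}$ at $X_0$ with that of the system augmented by $D$---but it is sensitive to the choice of $X_0$, which must be selected so that every nondegeneracy condition is strictly satisfied. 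I would organize the verification by choosing $X_0$ with coordinates in a small field (over $\mathbb{Q}$ or a small $\mathbb{F}_p$), confirming strict nondegeneracy, and then checking the rank jump directly.
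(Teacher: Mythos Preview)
Your strategy matches the paper's on both halves: normalise a projective frame, solve for the remaining points in terms of one or two parameters, and finish the incidence claim by a Gr\"obner/ideal-membership computation (the paper uses $p_1,p_3,p_4,p_6$ as the frame rather than your $p_1,\dotsc,p_5$, but your choice works equally well, and your two residual constraints $\{6,8,11\}$ and $\{7,9,12\}$ are exactly the ones left over in their parametrisation); then witness non-absoluteness by a first-order deformation over dual numbers. The paper's ring $\mathbb{Q}[x,\varepsilon]/(\varepsilon^{2},\,x^{2}-2-\varepsilon/4)$ is isomorphic to $\mathbb{Q}(\sqrt{2})[\varepsilon]/(\varepsilon^{2})$, so their explicit $3\times 13$ matrix is literally an instance of your proposed $X_0+\varepsilon X_1$.

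Two small points to tighten. First, your normalisation silently assumes $p_4\notin\ell_0$; the paper handles the complementary degenerate case separately, showing that if one of a short list of triples is collinear then all thirteen points are forced onto a single line, so you should record the analogous check. Second, and more substantively, you cannot choose $X_0$ with coordinates in $\mathbb{Q}$: carrying out your own elimination (or the paper's) shows that the two residual equations force a coordinate to satisfy $x^{2}=2$, so the configuration variety has no $\mathbb{Q}$-points whatsoever. Your Jacobian rank test must therefore be run over $\mathbb{Q}(\sqrt{2})$ (as the paper in effect does) or over an $\mathbb{F}_p$ in which $2$ is a square; the phrase ``over $\mathbb{Q}$'' in your last paragraph will not produce a valid $X_0$.
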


The collinearity conditions are depicted in Figure~\ref{fig:M}. This incidence theorem holds over any field $\mathbf{k}$.
\begin{center}
	\begin{figure}[hbt]
		\begin{subfigure}{.48\linewidth}
			\includegraphics[width=\linewidth]{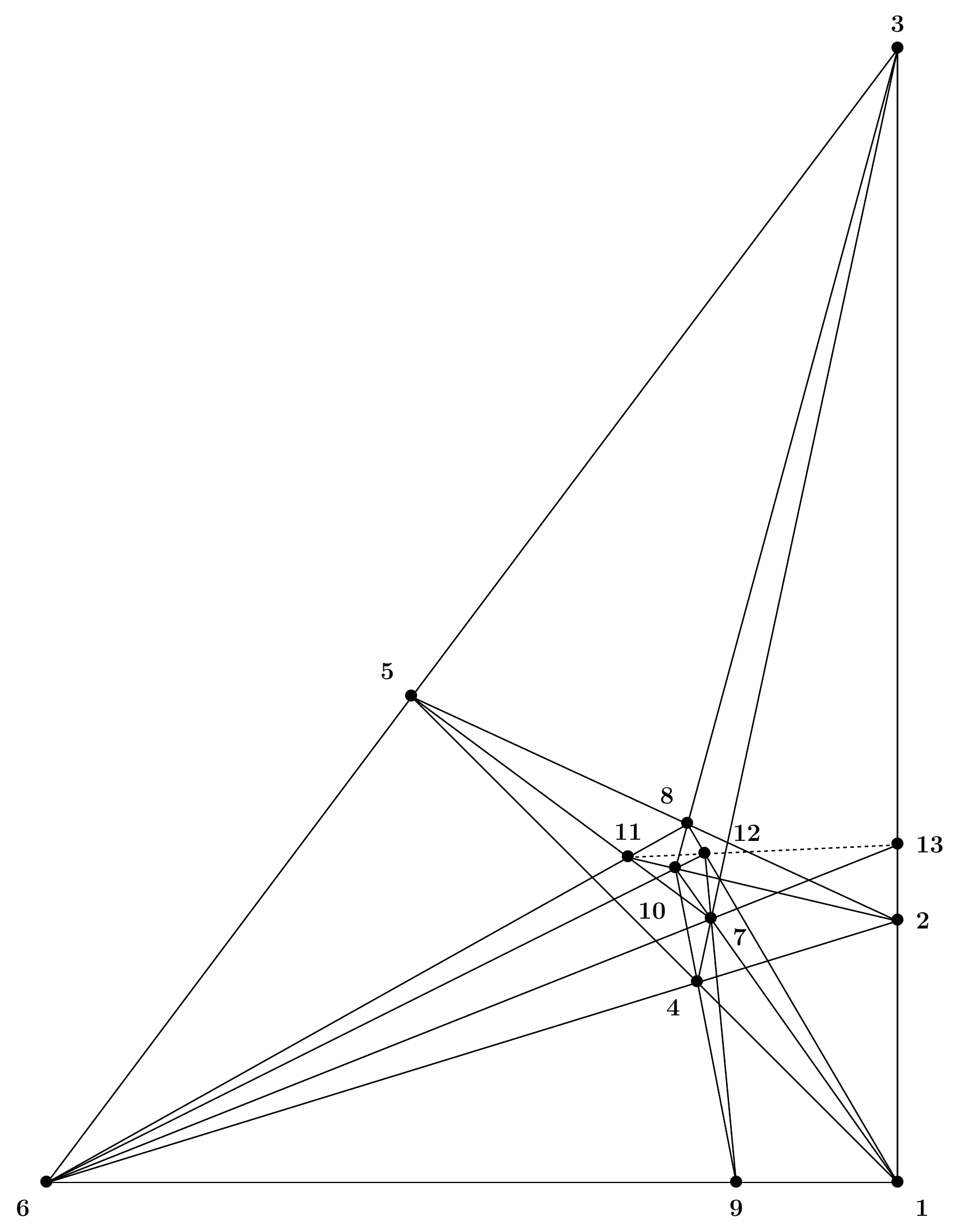}
			\caption{The collinearity conditions described in Theorem~\ref{thm:incidencestrong}.}
			\label{fig:M}
		\end{subfigure}
		\begin{subfigure}{.48\linewidth}
			\includegraphics[width=\linewidth]{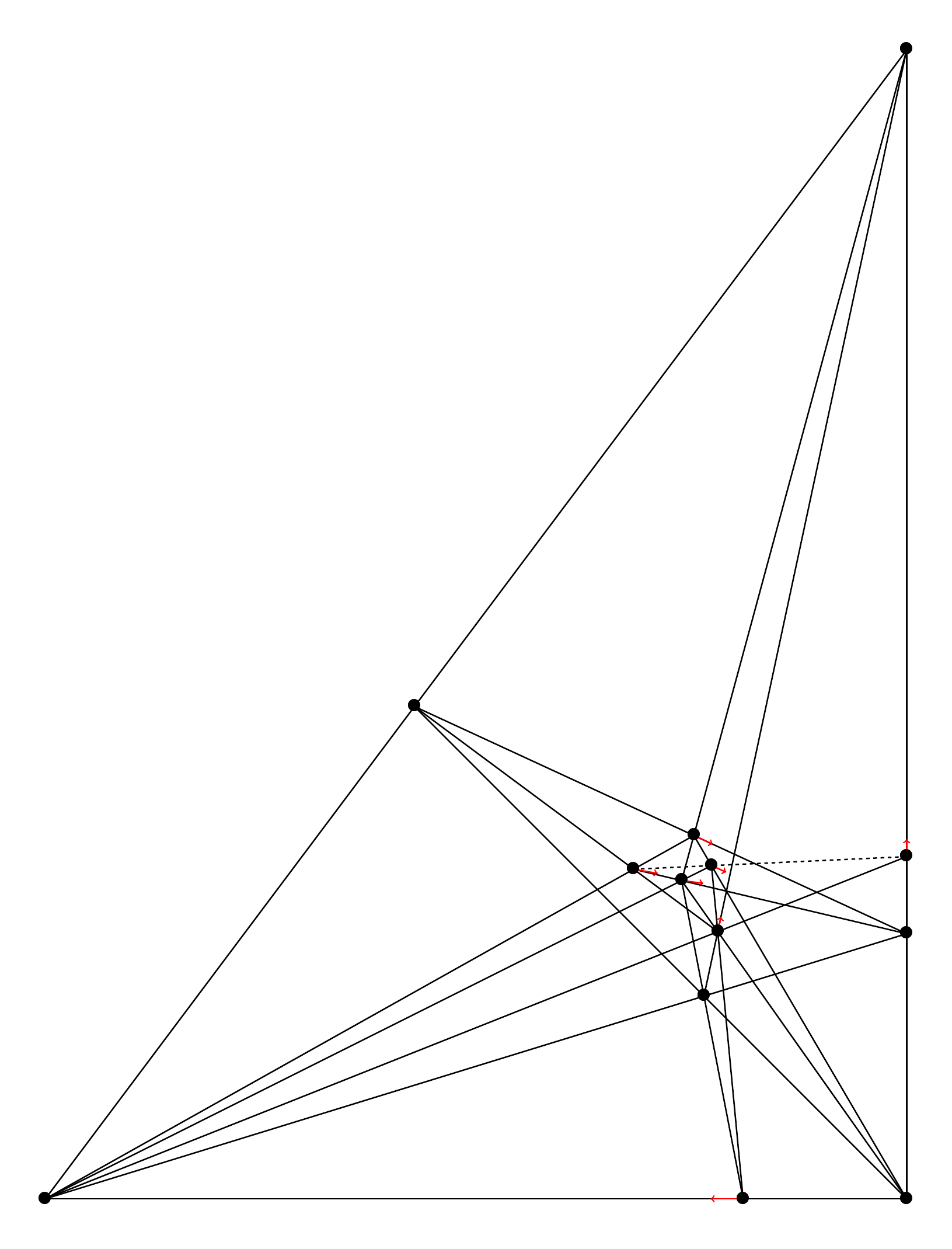}
			\caption{The infinitesimal motion of that configuration.}
			\label{fig:M_arrows}
		\end{subfigure}
		\caption{A configuration satisfying the conditions in Theorem~\ref{thm:incidencestrong} and its infinitesimal motion.}
	\end{figure}
\end{center}

We give two proofs of Theorem~\ref{thm:incidencestrong} in Section~\ref{sec:incidenceproof}. One proof is computer-assisted. The other proof is human-readable, but it involves a few tedious computations. 

To check that the incidence theorem is not an absolute incidence theorem, one only needs to give a $3 \times 13$ matrix with entries in a commutative ring $A$ such that the specified nondegeneracy and collinearity conditions hold, but the determinant of the $3 \times 3$ minor with columns $11, 12,$ and $13$ is not zero. If we set $A = \mathbb{Q}[x, \varepsilon]/(\varepsilon^2, x^2 - 2 - \varepsilon/4)$, then the following matrix suffices.
For instance, the determinant of the minor with columns $11, 12$, and $13$ is $2 \varepsilon/7 - \varepsilon x/14$. 

\begin{equation}\label{eq:matrix}
\left(
\begin{array}{cccccc}
1 & 1 & 0 & 1 & 0 & 0 \\
0 & 1 & 1 & 1 & 1 & 0 \\
0 & 0 & 0 & 1 & 1 & 1
\end{array}
\right.
\end{equation}

\vspace{-1.2em}

\[
\left.
\begin{array}{cccccccc}
1 & 1 & 1 & 1 & 1 & 1 & 1 \\
x +\frac{\varepsilon}{14} - \frac{\varepsilon x}{56} &
1 + x - \frac{3 \varepsilon}{28} - \frac{9 \varepsilon x}{56} &
0 &
2- \frac{3 \varepsilon}{28} - \frac{\varepsilon x}{28} &
1 + x - \frac{3 \varepsilon}{28} - \frac{9 \varepsilon x}{56} &
2- \frac{3 \varepsilon}{28} - \frac{\varepsilon x}{28} &
x +\frac{\varepsilon}{14} - \frac{\varepsilon x}{56} \\
1 &
x - \frac{3 \varepsilon}{28} - \frac{9 \varepsilon x}{56} &
2 - x + \frac{\varepsilon}{7} + \frac{5 \varepsilon x}{56} &
x - \frac{3 \varepsilon}{28} - \frac{9 \varepsilon x}{56} &
2 - \frac{5 \varepsilon}{28} - \frac{\varepsilon x}{7} &
4 - 2x &
0
\end{array}
\right).
\]

Incidence theorems over skew fields have been classically studied at least since the days of Hilbert \cite{Hilbert}. Unlike the situation over fields, it is undecidable to determine whether an incidence theorem holds over all skew fields \cite{KPY}. We do not know if the incidence theorem in Theorem~\ref{thm:incidencestrong} holds over skew fields. 

\medskip

We now describe the origin of the incidence theorem in Theorem~\ref{thm:incidencestrong}.  Let $M$ be the matroid obtained from \eqref{eq:matrix} by setting $\varepsilon = 0$, and let $M_0$ be the matroid obtained from $M$ by deleting the element $13$.  The rank $3$ simple matroid $M_0$ on $\{1, \dotsc, 12\}$ was shown to us by Dante Luber, who found it in his study with Dan Corey of the realization spaces of rank $3$ matroids on $12$ elements \cite{CL25}. It can be checked, for example using the \texttt{OSCAR} package for matroids~\cite{OscarMatroids}, that the rational realization space of $M_0$ (i.e., the quotient of the locus in $\operatorname{Gr}(3, 12)$ of linear subspaces realizing it by the action of the torus) is isomorphic to $\operatorname{Spec} \mathbb{Q}[x]/(x^2 - 2)^2$, i.e., to two non-reduced points. This means that there is an infinitesimal motion of the points in each realization of $M_0$ which preserves the incidence conditions, but this infinitesimal motion cannot be integrated. See Remark~\ref{rem:powerseries}. The matroid $M$ is obtained by adding a $13$th point at the intersection of the lines through $\{p_1, p_3\}$ and through $\{p_6, p_7\}$. In any of the realizations of $M_0$, the intersection of these lines is contained in the line passing through $\{p_{11}, p_{12}\}$. However, this fails infinitesimally: the infinitesimal motion of the points $p_1, \dotsc, p_{12}$ induces an infinitesimal motion of $p_{13}$ (as it is the intersection of the lines through $\{p_1, p_3\}$ and through $\{p_6, p_7\}$), but this motion does not preserve the coincidence that $p_{13}$ lies on the line through $\{p_{11}, p_{12}\}$.
The infinitesimal motion of these points is depicted with red arrows in Figure~\ref{fig:M_arrows}.
 The rational realization space of $M$ is isomorphic to $\operatorname{Spec} \mathbb{Q}[x]/(x^2 - 2)$.

\begin{remark}
The matroid $M_0$, found by Dan Corey and Dante Luber, is the first explicit example of a matroid whose realization space over a field of characteristic $0$ is non-reduced. The first example of a rank $3$ matroid with a singular realization space also occurs on $12$ elements \cite{CL25}. There are smaller examples of matroids whose realization space over a field of positive characteristic is non-reduced; for example, this occurs for the $9$-element Perles matroid in characteristic $5$. 
\end{remark}

\begin{remark}
Mn\"{e}v--Sturmfels universality \cite{Mnev,MR888886,Bokowski,MR1002208}, see also \cite{Lafforgue,LeeVakil}, shows that spaces defined by incidence equations (i.e., the subschemes of $\mathbb{A}^{3n}_{\mathbb{Z}}$ where certain nondegeneracy and collinearity conditions hold) can be almost arbitrary. However, the proofs of this result (such as the one in \cite{Lafforgue}) do not immediately adapt to produce incidence theorems which are not absolute incidence theorems. 
\end{remark}

\begin{remark}
In \cite{PS}, Pylyavskyy and Skopenkov gave a different definition of incidence theorems, in terms of incidence conditions on points and lines in the projective plane. Incidence theorems in this framework can be formulated in our framework and vice versa. One can similarly define absolute incidence theorems in their framework, and one can translate Theorem~\ref{thm:incidencestrong} into their framework to obtain an incidence theorem which is not an absolute incidence theorem. 
\end{remark}

\noindent
\textbf{Acknowledgments}
We thank Sergey Fomin for explaining to us several aspects of the beautiful work \cite{FominPylyavskyy}. We thank Kristof B\'erczi, Oliver Lorscheid and Tam\'as Schwarcz for help with Remark~\ref{rem:Nrealizable}. We thank Spencer Dembner, Sergey Fomin, Oliver Lorscheid, Pavlo Pylyavskyy, Mikhail Skopenkov, and Yuchong Zhang for helpful comments on a previous version of this paper. This work was partially conducted while the authors were visiting the Institute for Advanced Study, where the first author was funded by the Erik Ellentuck Fund and the second author is supported by the Charles Simonyi Endowment and the Oswald Veblen Fund.
The first author is supported by the Deutsche Forschungsgemeinschaft (DFG, German Research Foundation) -- SFB-TRR 358/1 2023 -- 491392403 and SPP 2458 -- 539866293.

\section{Absolute incidence theorems}\label{sec:absolute}

In this section, we discuss basic properties of absolute incidence theorems and make some remarks. We also give a direct proof that Pappus's theorem is an absolute incidence theorem. 
Throughout this section, we fix a commutative ring $A$.  We first give an alternative description of the nondegeneracy conditions in an absolute incidence theorem. 

\begin{lemma}\label{lem:field}
Elements $x, y, z, \dotsc$ of $A^3$ satisfy the nondegeneracy conditions of an absolute incidence theorem if and only if, for every maximal ideal $\mathfrak{m}$ of $A$, the images of $x, y, z,\dotsc$ in the vector space $(A/\mathfrak{m})^3$ over the field $A/\mathfrak{m}$ satisfy the nondegeneracy condition. 
\end{lemma}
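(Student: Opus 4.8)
The plan is to translate each of the three types of nondegeneracy conditions into a condition on minors, and then to apply the standard principle that an ideal in a commutative ring is the unit ideal if and only if it is not contained in any maximal ideal, together with the fact that a single element is a unit if and only if it lies in no maximal ideal. The key observation is that all three nondegeneracy conditions are expressed via ideals generated by certain minors, and passing to the residue field $A/\mathfrak{m}$ exactly checks membership in $\mathfrak{m}$.

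First I would recall, from \Cref{def:absolute}, the precise algebraic form of the three nondegeneracy conditions. A pair $\{i,j\}$ requires that the ideal $I$ generated by the three $2 \times 2$ minors of the columns $x,y$ is the unit ideal. A triple $\{i,j,k\}$ in the nondegeneracy conditions requires that the $3 \times 3$ determinant of the columns $x,y,z$ is a unit. (The column-generation condition, that the entries of each column generate the unit ideal, can be handled identically, though it is not literally one of the listed nondegeneracy conditions; I would note whether it needs to be folded in.) The claim to prove is that each such unit-ideal or unit condition holds in $A$ exactly when the corresponding condition holds in every residue field $A/\mathfrak{m}$.

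The core lemma I would isolate and use is: \emph{for any finitely many elements $f_1, \dotsc, f_r \in A$, the ideal $(f_1, \dotsc, f_r)$ equals $A$ if and only if for every maximal ideal $\mathfrak{m}$ the images $\overline{f_1}, \dotsc, \overline{f_r}$ in $A/\mathfrak{m}$ are not all zero.} The forward direction is immediate since the unit ideal cannot be contained in the proper ideal $\mathfrak{m}$; the reverse direction is the contrapositive of the fact that every proper ideal is contained in a maximal ideal (Zorn's lemma / Krull). I would then apply this with $r=3$ and the $2 \times 2$ minors for the pair condition, observing that $2 \times 2$ minors commute with reduction mod $\mathfrak{m}$, so the images of the minors are the minors of the reduced columns; hence $(f_1,f_2,f_3) = A$ iff the reduced columns $\overline{x}, \overline{y} \in (A/\mathfrak{m})^3$ are linearly independent, which is the field-level statement $p_i \neq p_j$. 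For the triple condition I would apply the single-element case $r=1$: the determinant $d$ is a unit in $A$ iff $d \notin \mathfrak{m}$ for all $\mathfrak{m}$, and since the determinant commutes with reduction, $\overline{d} \neq 0$ in $A/\mathfrak{m}$ is exactly the field-level nondegeneracy that the three reduced columns are not collinear.

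The main obstacle, such as it is, lies only in the bookkeeping of compatibility between taking minors and reducing modulo $\mathfrak{m}$, and in making sure the logical quantifiers line up: the statement is a biconditional, and the two directions correspond to the two directions of the ideal-membership lemma. There is no deep difficulty here, since both directions rest on standard commutative algebra; I would simply present the unit-ideal lemma cleanly, verify that determinants and $2 \times 2$ minors are polynomial expressions in the matrix entries and therefore respect ring homomorphisms $A \to A/\mathfrak{m}$, and conclude by checking each nondegeneracy condition type in turn.
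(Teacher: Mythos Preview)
Your proposal is correct and follows essentially the same approach as the paper: both reduce the claim to the standard fact that an ideal in a commutative ring is the unit ideal if and only if it is contained in no maximal ideal. The paper's proof is simply a one-sentence version of your argument, stating this fact and observing that all the nondegeneracy conditions are of the form ``a certain ideal is the unit ideal''; your plan just fills in the case-by-case verification and the compatibility of minors with reduction, which the paper leaves implicit.
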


\begin{proof}
The nondegeneracy conditions state that certain ideals are the unit ideal. In any commutative ring, every ideal except for the unit ideal is contained in a maximal ideal, so an ideal $I$ is the unit ideal if and only if there is no maximal ideal $\mathfrak{m}$ such that the image of $I$ in $A/\mathfrak{m}$ is $0$. 
\end{proof}

\begin{remark}
If $\{x, y, z, \dotsc\}$ is a collection of elements of $A^3$ which satisfies conditions (1) and (2) of Definition~\ref{def:absolute}, and the determinant of a $3 \times 3$ matrix corresponding to a nondegeneracy condition $\{i, j, k\}$ is not a zero-divisor, then the conclusion of the absolute incidence theorem will still hold. Indeed, we can consider the total ring of fractions $Q(A)$, which is the localization of $A$ at the set of all elements which are not zero-divisors. The natural map $A \to Q(A)$ is an injection, and the nondegeneracy conditions will be satisfied if we view this matrix as living in $Q(A)$. 
In particular, an incidence theorem (which holds over all fields) also holds over all integral domains. 
\end{remark}

\begin{remark}
If $A$ is a commutative ring, then the set of columns of nonzero $3 \times 3$ minors of a matrix with coefficients in $A$ need not be the bases of a rank $3$ matroid. For example, consider the following matrix, with coefficients in $\mathbb{Q}[\varepsilon]/(\varepsilon^2)$:
$$\begin{pmatrix} 1 & 0 & 0 & 0 & 0 \\ 0 & \varepsilon & 1 & 0 & 5 \\ 0 & 0 & \varepsilon & \varepsilon & 3 \end{pmatrix}.$$
Then the columns of nonzero $3 \times 3$ minors are the sets $\{1, 2, 5\}, \{1, 3, 4\}, \{1, 3, 5\}, \{1, 4, 5\}$, which is not the set of bases of a matroid. 
\end{remark}

A collection of nondegeneracy conditions defines a Zariski open subset of the space of $3 \times n$ matrices $\mathbb{A}^{3n}_{\mathbb{Z}}$, and a collection of collinearity conditions defines a closed subscheme of $\mathbb{A}^{3n}_{\mathbb{Z}}$. The corresponding absolute incidence theorem holds if and only if the intersection of these subschemes is equal to the subscheme where the determinant of the $3 \times 3$ minor corresponding to the conclusion also vanishes. The incidence theorem holds if and only if these subschemes of $\mathbb{A}^{3n}_{\mathbb{Z}}$ have the same underlying set. 

Recall that an Artinian local ring is a commutative ring with a unique maximal ideal which is a finite-dimensional vector space over a subring which is a field. 

\begin{proposition}\label{prop:artinian}
An incidence theorem is an absolute incidence theorem if it holds for every $3 \times n$ matrix with entries in $A$ for every Artinian local ring $A$. 
\end{proposition}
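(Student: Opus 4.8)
The plan is to reduce the validity of the incidence theorem over an arbitrary commutative ring to its validity over the Artinian local quotients of a suitable Noetherian local ring, where the hypothesis applies, and then to recover the general statement using Krull's intersection theorem. So let $A$ be a commutative ring and let $m$ be a $3 \times n$ matrix over $A$ satisfying conditions (1)--(4) of \Cref{def:absolute}; write $d \in A$ for the determinant of the minor on the conclusion columns $a, b, c$. We must show $d = 0$.

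First I would descend to a Noetherian subring. Let $A' \subseteq A$ be the subring generated over $\mathbb{Z}$ by the entries of $m$ together with finitely many witnesses for the nondegeneracy conditions: for each unit-ideal condition of type (1) or (2), an expression $1 = \sum_k c_k g_k$ in which the $g_k$ are the relevant entries or $2 \times 2$ minors (already polynomials in the entries) and the $c_k \in A$; and for each condition of type (3), an element $b \in A$ with $b \cdot \det = 1$. Since only finitely many conditions occur, $A'$ is a finitely generated $\mathbb{Z}$-algebra, hence Noetherian by the Hilbert basis theorem. All of the defining equalities hold among elements of $A'$ and therefore hold in $A'$, so $m$ satisfies (1)--(4) over $A'$; as $d \in A'$ and $A' \hookrightarrow A$, it suffices to prove $d = 0$ in $A'$.

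Next I would localize and pass to Artinian quotients. We have $d = 0$ in $A'$ if and only if the image of $d$ vanishes in $A'_{\mathfrak{m}}$ for every maximal ideal $\mathfrak{m}$, and each localization $(B, \mathfrak{n}) := (A'_{\mathfrak{m}}, \mathfrak{m} A'_{\mathfrak{m}})$ is Noetherian local. The conditions are preserved under the maps involved, since any ring homomorphism sends the unit ideal to the unit ideal and units to units (so (1), (2), (3) persist) and sends $0$ to $0$ (so (4) persists); compare \Cref{lem:field}. For each $k \ge 1$ the quotient $B/\mathfrak{n}^k$ is a Noetherian local ring of Krull dimension $0$, hence an Artinian local ring, and the surjection $B \to B/\mathfrak{n}^k$ again preserves (1)--(4). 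By the hypothesis of the proposition, the image of $d$ in $B/\mathfrak{n}^k$ is therefore $0$; that is, $d \in \mathfrak{n}^k$. As this holds for every $k$, Krull's intersection theorem gives $d \in \bigcap_{k \ge 1} \mathfrak{n}^k = 0$, so $d = 0$ in $B$, and unwinding the reductions yields $d = 0$ in $A$.

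I expect the principal subtlety to be the first reduction: the nondegeneracy conditions assert that certain ideals are the unit ideal, and the coefficients witnessing this need not lie in the subring generated by the matrix entries, so one must deliberately enlarge the subring to carry these witnesses before invoking Noetherianity. After that, the only nontrivial input is Krull's intersection theorem, which requires the Noetherian local hypothesis and explains why one localizes before forming the quotients $B/\mathfrak{n}^k$. In scheme-theoretic terms this is exactly the assertion, in the language preceding the proposition, that a function on the finite-type $\mathbb{Z}$-scheme cut out by these conditions vanishes once it vanishes in every Artinian quotient of every local ring $\mathcal{O}_{X,x}$; note that these quotients may have mixed characteristic (for instance $\mathbb{Z}/p^k$), which is why the hypothesis is stated over all Artinian local rings rather than only over algebras finite-dimensional over a field.
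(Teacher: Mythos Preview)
Your proof is correct and follows essentially the same route as the paper's: both reduce to a Noetherian ring (the paper via the finite-type universal scheme $Z \subset \mathbb{A}^{3n}_{\mathbb{Z}}$, you via explicit descent to a finitely generated $\mathbb{Z}$-subalgebra carrying the nondegeneracy witnesses), then localize at a maximal ideal and invoke Krull's intersection theorem to conclude from the Artinian quotients $B/\mathfrak{n}^k$. The only difference is packaging---scheme-theoretic versus ring-theoretic---and your closing remark about mixed-characteristic quotients such as $\mathbb{Z}/p^k$ is a useful clarification.
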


\begin{proof}
Let $Z$ be the subscheme of $\mathbb{A}^{3n}_{\mathbb{Z}}$ obtained by intersecting the open set where the nondegeneracy conditions hold with the closed subscheme defined by the collinearity conditions. Let $Z_0$ be the intersection of $Z$ with the locus where the $3 \times 3$ minor corresponding to the conclusion vanishes. This is an absolute incidence theorem if and only if the inclusion $Z_0 \subseteq Z$ is an equality. 

To check whether this inclusion is an equality, we can choose an affine open cover of $Z$ and check it on each affine piece. The claim then follows from the following statement: let $R$ be a Noetherian ring, and let $I$ be a nonzero proper ideal of $R$. Then there is an Artinian local ring $A$ and a ring homomorphism $\varphi \colon R \to A$ such that $\varphi(I) \not=0$. 

To prove this, we first find a maximal ideal $\mathfrak{m}$ containing $I$. Then the image of $I$ in $R_{\mathfrak{m}}$ is nonzero. As $R_{\mathfrak{m}}$ is a Noetherian local ring, $\cap_k \mathfrak{m}^k = 0$, so there is some $j$ such that $I \not \subset \mathfrak{m}^j$. Then the image of $I$ in $R_{\mathfrak{m}}/\mathfrak{m}^j$ is nonzero, and $R_{\mathfrak{m}}/\mathfrak{m}^j$ is an Artinian local ring. 
\end{proof}

Most algebraic techniques that are used to prove incidence theorems can be modified to prove the corresponding absolute incidence theorem. The main subtleties come from the failure of the zero-product property for commutative rings. As an illustration, we give a direct algebraic proof that Pappus's theorem (Example~\ref{ex:pappus}) is an absolute incidence theorem. 
Given elements $x, y, z$ in $A^3$, we set $[x, y, z]$ to be the determinant of the $3 \times 3$ matrix with columns given by $x$, $y$, and $z$.

\begin{example}
Let $x_1, x_2, \dotsc, x_9$ be the columns of a $3 \times 9$ matrix of elements of $A$ satisfying the hypothesis of Pappus's theorem. The nondegeneracy conditions guarantee that $[x_1, x_2, x_5]$, $[x_1, x_4, x_5]$, and $[x_2, x_4, x_5]$ are all units, so, by Cramer's rule, we can find units $\lambda_1, \lambda_2$, and $\lambda_3$ such that $x_5 = \lambda_1 x_1 + \lambda_2 x_2 + \lambda_3 x_4$. Let $B$ be the $3 \times 3$ matrix with columns $\lambda_1 x_1, \lambda_2 x_2$, and $\lambda_3 x_4$. The nondegeneracy conditions imply that the determinant of $B$ is a unit, so $B$ is invertible. By multiplying the $3 \times 9$ matrix by $B^{-1}$ and scaling some columns by units, we reduce to the case when $x_1 = (1, 0, 0)$, $x_2 = (0, 1, 0)$, $x_4 = (0, 0, 1)$, and $x_5 = (1,1,1)$. Using the collinearity conditions, we deduce the following form of the matrix: 
$$S = \begin{pmatrix} 1 & 0 & ? & 0 & 1 & ? & 0 & ? & ? \\ 0 & 1 & ? & 0 & 1 & ? & ? & ? & ?  \\ 0 & 0 & 0 & 1 & 1 & ? & ? & ? & ?\end{pmatrix}.$$
The nondegeneracy conditions guarantee that every entry marked with a $?$ is a unit in $A$, and so by scaling the columns we can assume that the first nonzero entry in each column is $1$. As $[x_1, x_5, x_7] = [x_4, x_5, x_6] = 0$, we deduce that $S_{3,7} = 1 = S_{2,6} = 1$. Set $S_{2,3} = a, S_{3,6} = b,$ and $S_{2,9} = c$. Using the collinearity conditions $\{2, 6, 9\}, \{3, 4, 8\}$, and $\{1, 6, 8\}$, we deduce the following form of the matrix:
$$S = \begin{pmatrix} 1 & 0 & 1 & 0 & 1 & 1 & 0 & 1 & 1 \\ 0 & 1 & a & 0 & 1 & 1 & 1 & a & c  \\ 0 & 0 & 0 & 1 & 1 & b & 1 & ab & b\end{pmatrix}.$$
Then $[x_3, x_5, x_9] = a + b - ab - c = -[x_7, x_8, x_9]$, so $[x_7, x_8, x_9] = 0$ as $\{3, 5, 9\}$ is a collinearity condition. 
\end{example}

\section{The master theorem proves absolute incidence theorems}\label{sec:master_theorem}

As in the previous section, we fix a commutative ring $A$. For an element $x \in A^3$, we refer to its coordinates as $x_1, x_2, x_3$. Given elements $x$ and $y$ in $A^3$, we set $x \cdot y = x_1 y_1 + x_2 y_2 + x_3 y_3$ and $x \times y = (x_2 y_3 - x_3y_2, x_3y_1 - x_1y_3, x_1y_2 - x_2y_1)$. 
Recall that $[x, y, z] = (x \times y) \cdot z$, where $[x, y, z]$ is the determinant of the $3 \times 3$ matrix with columns $x, y$, and $z$.  

\begin{lemma}\label{lem:cross}
Let $v, s$, and $t$ be elements of $A^3$. Suppose that $v \cdot s = v \cdot t = 0$, and the ideal generated by the entries of $s \times t$ is the unit ideal. Then $v = \lambda (s \times t)$ for some $\lambda \in R$. 
\end{lemma}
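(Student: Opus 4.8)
The plan is to set $w = s \times t$ and show directly that $v$ is a scalar multiple of $w$, producing the scalar explicitly from the unit-ideal hypothesis. Over a field this is routine linear algebra, but over a general commutative ring one cannot simply divide, so the argument should be organized around polynomial identities that hold over $\mathbb{Z}$ and therefore over every commutative ring.

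The main tool is the vector triple product identity
\[
a \times (b \times c) = (a \cdot c)\, b - (a \cdot b)\, c,
\]
valid for all $a, b, c \in A^3$ since it is a polynomial identity in the nine coordinates over $\mathbb{Z}$. Applying it with $a = v$, $b = s$, $c = t$ gives
\[
v \times w = v \times (s \times t) = (v \cdot t)\, s - (v \cdot s)\, t = 0,
\]
using the hypotheses $v \cdot s = v \cdot t = 0$. Unpacking $v \times w = 0$ coordinatewise yields the relations $v_i w_j = v_j w_i$ for all $i, j$; equivalently, every $2 \times 2$ minor of the matrix with rows $v$ and $w$ vanishes.

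To produce the scalar, I would invoke the remaining hypothesis: the ideal generated by $w_1, w_2, w_3$ is the unit ideal, so there exist $\mu_1, \mu_2, \mu_3 \in A$ with $\mu_1 w_1 + \mu_2 w_2 + \mu_3 w_3 = 1$. Set $\lambda = \mu_1 v_1 + \mu_2 v_2 + \mu_3 v_3 \in A$. Then for each index $i$,
\[
\lambda w_i = \sum_k \mu_k v_k w_i = \sum_k \mu_k v_i w_k = v_i \sum_k \mu_k w_k = v_i,
\]
where the middle equality uses the cross-relations $v_k w_i = v_i w_k$. Hence $v = \lambda w = \lambda (s \times t)$, as claimed.

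The only real subtlety is that one cannot conclude $v = \lambda w$ from $v \times w = 0$ alone: over a ring the relations $v_i w_j = v_j w_i$ do not by themselves force proportionality, and it is precisely the partition of unity $\sum_k \mu_k w_k = 1$ that lets one ``divide'' and pin down $\lambda$. I expect this bookkeeping step, rather than the triple-product computation, to be where care is needed, since everything else reduces to identities valid over $\mathbb{Z}$.
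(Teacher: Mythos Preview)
Your proof is correct and essentially the same as the paper's: both derive the relations $v_i w_j = v_j w_i$ (the paper by directly combining the two dot-product equations, you via the triple-product identity, which is the same computation repackaged), and then both construct $\lambda = \sum_k \mu_k v_k$ from a partition of unity for $w$ and verify $\lambda w_i = v_i$.
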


\begin{proof}
Set $s \times t = (c_1, c_2, c_3)$. By multiplying the equation $v_1 s_1 + v_2 s_2 + v_3 s_3 = 0$ by $t_3$, multiplying the equation $v_1 t_1 + v_2 t_2 + v_3 t_3 = 0$ by $s_3$, and subtracting, we deduce that $v_1 c_2 = v_2 c_1$. Similarly, we have that $v_1 c_3 = v_3 c_1$ and $v_2 c_3 = v_3 c_2$. 

Because the ideal generated by $c_1, c_2, c_3$ is the unit ideal, there are $a_1, a_2, a_3 \in A$ such that $a_1 c_1 + a_2 c_2 + a_3 c_3 = 1$. Set $\lambda = a_1 v_1 + a_2 v_2 + a_3 v_3$. We claim that $\lambda c_i = v_i$ for each $i$. For example,
\begin{equation*}
\lambda c_1 = a_1 v_1 c_1 + a_2 v_2 c_1 + a_3 v_3 c_1 = a_1 v_1 c_1 + a_2 v_1 c_2 + a_3 v_1 c_3 = v_1(a_1 c_1 + a_2 c_2 + a_3 c_3) = v_1. \qedhere \end{equation*}
\end{proof}

\begin{lemma}\label{lem:parallel}
Let $v, w$ be elements of $A^3$, and suppose that the ideal generated by the entries of $v \times w$ is the unit ideal. Then for any unit $u \in A$ and any $\lambda \in A$, the ideal generated by the entries of $\lambda v + u w$ is the unit ideal. 
\end{lemma}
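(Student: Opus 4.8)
The plan is to avoid any linear-algebra reasoning and instead exploit a single exact identity, so that the argument is valid over an arbitrary commutative ring. Write $z = \lambda v + u w$ and denote its coordinates by $z_1, z_2, z_3$. The key computation is
$v \times z = v \times (\lambda v + u w) = \lambda\,(v \times v) + u\,(v \times w) = u\,(v \times w)$,
where I use the bilinearity of the cross product and the fact that $v \times v = 0$ over any commutative ring (this last point needs only the commutativity of $A$, since the entries of $v \times v$ are differences $v_i v_j - v_j v_i$).

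From this identity I would extract two facts. First, because $u$ is a unit, multiplication by $u$ is a bijection of $A$, so the ideal generated by the entries of $v \times z = u\,(v \times w)$ equals the ideal generated by the entries of $v \times w$, which is the unit ideal by hypothesis. Second, by the very definition of the cross product each entry of $v \times z$ is an $A$-linear combination of just two of the coordinates $z_1, z_2, z_3$ (the coefficients being among $\pm v_1, \pm v_2, \pm v_3$); hence all three entries of $v \times z$ lie in the ideal $(z_1, z_2, z_3)$ generated by the entries of $z$. Combining the two facts, the unit ideal is contained in $(z_1, z_2, z_3)$, which forces $(z_1, z_2, z_3)$ to be the unit ideal — exactly the claim.

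I do not expect a serious obstacle: the entire content is spotting the identity $v \times z = u\,(v \times w)$, after which the statement collapses to a one-line ideal containment. The only thing to be careful about is the temptation to argue ``over a field $v$ and $w$ are linearly independent, so $\lambda v + u w \neq 0$.'' That reasoning would require reducing modulo every maximal ideal, in the style of Lemma~\ref{lem:field}, and, while perfectly correct, it is precisely the linear-algebra crutch we are trying to do without over a general ring. As an optional sanity check one could run that alternative: over each residue field $A/\mathfrak{m}$ the nonvanishing of $v \times w$ makes $v$ and $w$ independent and $u$ nonzero, so $\lambda v + u w \neq 0$ there, and nonvanishing modulo every maximal ideal is equivalent to generating the unit ideal. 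But the direct identity is cleaner and avoids this entirely, so it is the route I would take.
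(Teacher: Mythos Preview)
Your argument is correct, and it is genuinely different from the paper's. The paper proves Lemma~\ref{lem:parallel} by exactly the route you flag as the ``linear-algebra crutch'': it supposes the ideal generated by the entries of $\lambda v + u w$ is proper, picks a maximal ideal $\mathfrak{m}$ containing it, and observes that over the field $A/\mathfrak{m}$ the images of $v$ and $w$ are not parallel while the image of $u$ is nonzero, a contradiction. Your proof instead computes $v \times z = u\,(v \times w)$ directly and reads off the ideal containment $(u\,(v \times w)_1,\,u\,(v \times w)_2,\,u\,(v \times w)_3) \subseteq (z_1, z_2, z_3)$ from the formula for the cross product. What you gain is a constructive, Zorn-free argument that stays entirely inside the ring $A$ and fits the spirit of the surrounding section (which is trying to avoid linear algebra); as a bonus it yields the slightly sharper fact that the ideal generated by the entries of $v \times z$ actually \emph{equals} that of $v \times w$. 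What the paper's proof gains is brevity and the reuse of the principle already isolated in Lemma~\ref{lem:field}, at the cost of invoking maximal ideals.
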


\begin{proof}
Suppose that the ideal generated by the entries of $\lambda v + u w$ is a proper ideal. Then it is contained in some maximal ideal $\mathfrak{m}$ of $A$. Consider the images of $v$ and $w$ in the vector space $(A/\mathfrak{m})^3$. The images are not parallel, and the image of $u$ in $A/\mathfrak{m}$ is nonzero, giving a contradiction. 
\end{proof}

\begin{lemma}\label{lem:crossnotparallel}
Let $v, w, s, t$ be elements of $A^3$. Suppose that $[s, t, v]$ is a unit, and that the ideal generated by the entries of $v \times w$ is the unit ideal. Then the ideal generated by the entries of $(s \times t) \times (v \times w)$ is the unit ideal. 
\end{lemma}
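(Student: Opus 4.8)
The plan is to use Lemma~\ref{lem:field}, which reduces checking that an ideal is the unit ideal to checking that its image is nonzero in every residue field $A/\mathfrak{m}$. So I would fix a maximal ideal $\mathfrak{m}$ and pass to the field $\mathbf{k} = A/\mathfrak{m}$, writing $\bar{v}, \bar{w}, \bar{s}, \bar{t}$ for the images of the four vectors. Since being a unit and generating the unit ideal are both detected modulo every $\mathfrak{m}$, the two hypotheses say precisely that $[\bar{s}, \bar{t}, \bar{v}] \not= 0$ and that $\bar{v} \times \bar{w} \not= 0$ in $\mathbf{k}^3$. The goal becomes: show that $(\bar{s} \times \bar{t}) \times (\bar{v} \times \bar{w}) \not= 0$ in $\mathbf{k}^3$.

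The key observation is a geometric/linear-algebraic reformulation over the field $\mathbf{k}$. The cross product $a \times b$ of two vectors in $\mathbf{k}^3$ is zero exactly when $a$ and $b$ are parallel (linearly dependent). Thus $(\bar{s} \times \bar{t}) \times (\bar{v} \times \bar{w}) = 0$ would force $\bar{s} \times \bar{t}$ and $\bar{v} \times \bar{w}$ to be parallel. Here $\bar{v} \times \bar{w}$ is nonzero by the second hypothesis, so I must rule out that $\bar{s} \times \bar{t}$ is a scalar multiple of it (including the possibility that $\bar{s} \times \bar{t} = 0$). If $\bar{s} \times \bar{t}$ were parallel to $\bar{v} \times \bar{w}$, then, using the identity $[\bar{s}, \bar{t}, \bar{v}] = (\bar{s} \times \bar{t}) \cdot \bar{v}$, I would get $(\bar{s} \times \bar{t}) \cdot \bar{v}$ proportional to $(\bar{v} \times \bar{w}) \cdot \bar{v}$, and the latter is $0$ since $\bar{v} \times \bar{w}$ is orthogonal to $\bar{v}$. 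This contradicts $[\bar{s}, \bar{t}, \bar{v}] \not= 0$. Notice this argument simultaneously handles the degenerate case $\bar{s} \times \bar{t} = 0$, since the zero vector is trivially parallel to everything and the same contradiction applies.

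I would then assemble these over-a-field facts. Concretely: suppose the ideal generated by the entries of $(s \times t) \times (v \times w)$ is proper, hence contained in some maximal ideal $\mathfrak{m}$; reducing mod $\mathfrak{m}$ gives $(\bar{s} \times \bar{t}) \times (\bar{v} \times \bar{w}) = 0$ in $\mathbf{k}^3$, forcing $\bar{s} \times \bar{t}$ parallel to the nonzero vector $\bar{v} \times \bar{w}$, whence $[\bar{s}, \bar{t}, \bar{v}] = (\bar{s} \times \bar{t}) \cdot \bar{v} = 0$ in $\mathbf{k}$, contradicting that $[s, t, v]$ is a unit (so its image is nonzero). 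By Lemma~\ref{lem:field} or the same no-maximal-ideal reasoning used in Lemma~\ref{lem:parallel}, the ideal must be the unit ideal.

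I do not expect a serious obstacle here, since the whole point is that once we reduce to a field, everything is classical three-dimensional linear algebra. The only thing to be careful about is that I cannot manipulate cross products and scalars freely over the ring $A$ itself — the cross-product identities like $a \cdot (a \times b) = 0$ hold over any commutative ring, but statements like ``$a \times b = 0$ implies $a, b$ parallel'' are false over general rings and genuinely require passing to a field. So the mild subtlety is purely bookkeeping: phrase the argument as a reduction modulo maximal ideals (exactly as in Lemma~\ref{lem:parallel}) rather than trying to produce the scalar $\lambda$ directly over $A$.
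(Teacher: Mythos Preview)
Your proof is correct, but it takes a different route from the paper. The paper's argument is purely algebraic over $A$: it applies the vector triple product identity $(s \times t) \times (v \times w) = [s,t,w]\,v - [s,t,v]\,w$ directly over the ring, and then invokes Lemma~\ref{lem:parallel} with $\lambda = [s,t,w]$ and the unit $u = -[s,t,v]$. Your approach instead bypasses the triple product identity and goes straight to the residue-field reduction, arguing geometrically that $(\bar s\times\bar t)\times(\bar v\times\bar w)=0$ forces $\bar s\times\bar t$ parallel to $\bar v\times\bar w$, and hence $[\bar s,\bar t,\bar v]=(\bar s\times\bar t)\cdot\bar v=0$. Both are short and valid; the paper's version has the mild advantage that the identity over $A$ makes the structure visible (and is reused implicitly in Lemma~\ref{lem:identity}), while your version is self-contained and does not rely on remembering the triple product formula. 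In effect, your argument unpacks Lemma~\ref{lem:parallel} inline rather than reducing to it.
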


\begin{proof}
By the triple product identity, we have $(s \times t) \times (v \times w) = ((s \times t) \cdot w)v - ((s \times t) \cdot v)w = [s, t, w]v - [s, t, v]w$. As $[s, t, v]$ is a unit, the result follows from Lemma~\ref{lem:parallel}. 
\end{proof}

\begin{lemma}\label{lem:identity}
For any $s, t, v, w, P$ and $Q$ in $A^3$, we have
$$[s, t, P] [v, w, Q] - [v, w, P] [s, t, Q] = [P, Q, (s \times t) \times (v \times w)]. $$
\end{lemma}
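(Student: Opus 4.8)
The plan is to recognize this as the classical Binet--Cauchy (Lagrange) identity in disguise, and to justify it as a polynomial identity over $\mathbb{Z}$ so that it holds over the arbitrary commutative ring $A$. First I would abbreviate $a = s \times t$ and $b = v \times w$. Using the definition $[x, y, z] = (x \times y) \cdot z$ together with the symmetry of the dot product, each triple product on the left rewrites as $[s, t, P] = a \cdot P$, $[v, w, Q] = b \cdot Q$, $[v, w, P] = b \cdot P$, and $[s, t, Q] = a \cdot Q$, so the left-hand side becomes $(a \cdot P)(b \cdot Q) - (b \cdot P)(a \cdot Q)$. The right-hand side is $[P, Q, a \times b] = (P \times Q) \cdot (a \times b)$. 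Hence the lemma is equivalent to the identity
\[
(P \times Q) \cdot (a \times b) = (P \cdot a)(Q \cdot b) - (P \cdot b)(Q \cdot a)
\]
for arbitrary $P, Q, a, b \in A^3$, which is exactly the Binet--Cauchy identity.

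To establish this over an arbitrary commutative ring, I would view both sides as fixed polynomials with integer coefficients in the twelve coordinates of $P, Q, a, b$. Equality of these two polynomials in $\mathbb{Z}[x_1, \dots, x_{12}]$ transfers to equality in $A$ after applying the ring homomorphism that sends the variables to the given coordinates; this is precisely the step that lets us avoid any appeal to linear algebra over $A$. To check the equality in $\mathbb{Z}[x_1, \dots, x_{12}]$ it suffices to check it in $\mathbb{Q}[x_1, \dots, x_{12}] \supseteq \mathbb{Z}[x_1, \dots, x_{12}]$, where it is the classical Lagrange identity and follows from a direct expansion of both sides into their twelve monomials (or may simply be cited). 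Specializing $a = s \times t$ and $b = v \times w$ then recovers the statement of the lemma.

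I do not expect a genuine obstacle here, as the identity is purely formal. The only points requiring care are to carry out the reduction over $\mathbb{Z}$ rather than reasoning with coordinates in $A$ directly, and to keep the bookkeeping straight when matching the four reduced triple products $a \cdot P$, $b \cdot Q$, $b \cdot P$, $a \cdot Q$ to the correct terms of Binet--Cauchy, where a sign slip is the most likely source of error.
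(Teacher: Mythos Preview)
Your proof is correct and shares the paper's core idea: both arguments reduce the statement to a polynomial identity with integer coefficients, so that it suffices to verify it over a polynomial or rational-function ring and then specialize. The difference is only in how the verification is carried out. You first substitute $a=s\times t$ and $b=v\times w$, reducing to a $12$-variable identity that you recognize as Binet--Cauchy/Lagrange; the paper instead keeps all $18$ coordinates of $s,t,v,w,P,Q$ independent and observes that, with $s,t,v,w$ fixed, both sides are bilinear in $P$ and $Q$, so one need only check the case where $P$ and $Q$ are standard basis vectors. Your route is slightly more economical if one is willing to cite Binet--Cauchy; the paper's route is self-contained and avoids naming any auxiliary identity.
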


\begin{proof}
It suffices to check this when the coordinates of $s, t, v, w, P$ and $Q$ are algebraically independent, so we may take $A = \mathbb{Q}(x_1, \dotsc, x_{18})$ and $s = (x_1, x_2, x_3), t = (x_4, x_5, x_6)$, and so on. If we fix $s, t, v$, and $w$, then both sides are bilinear functions of $P$ and $Q$. It therefore suffices to check this identity when $P$ and $Q$ are standard basis vectors, when it is obvious. 
\end{proof}

The identity in Lemma~\ref{lem:identity} is closely related to the mixed cross-ratio discussed in \cite[Definition 2.4]{FominPylyavskyy}, which is used in Fomin and Pylyavskyy's proof of Theorem~\ref{thm:master}. 

\begin{proposition}\label{prop:fundamental}
Let $A$ be a commutative ring, and let $s, t, v, w, P, Q, R$ be elements of $A^3$. Suppose that the ideals generated by the entries of $v \times w$ or the entries of $R$ are the unit ideal, and that $[s, t, v]$ is a unit. Assume that $[s, t, R] = [v, w, R] = 0$. Then there is a unit $u$ of $A$ such that
$$[s, t, P] [v, w, Q] - [v, w, P] [s, t, Q] = u[P, Q, R].$$ 
\end{proposition}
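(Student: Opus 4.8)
The plan is to use Lemma~\ref{lem:identity} to collapse the left-hand side into a single bracket and then identify the resulting vector with a unit multiple of $R$. Set $N := (s \times t) \times (v \times w)$. By Lemma~\ref{lem:identity} the left-hand side equals $[P, Q, N]$, so it suffices to produce a unit $u \in A$ with $N = uR$; then $[P,Q,N] = [P,Q,uR] = u[P,Q,R]$, since $[x,y,z] = (x \times y) \cdot z$ is linear in its last argument. The geometric picture is that $N$ is a cross product, so $(s \times t) \cdot N = (v \times w) \cdot N = 0$, while the hypotheses $[s,t,R] = (s \times t) \cdot R = 0$ and $[v,w,R] = (v \times w) \cdot R = 0$ say that $R$ is annihilated by the same two vectors. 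Thus $R$ and $N$ ought to be ``parallel,'' and the real work is to make this precise over a general commutative ring and to check that the proportionality constant is a unit.

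To obtain the proportionality I would reuse the elimination from the proof of Lemma~\ref{lem:cross}, but applied to $R$: from $(s \times t) \cdot R = 0$ and $(v \times w) \cdot R = 0$, the same computation (with the $s, t, v$ of that lemma replaced by $s \times t$, $v \times w$, and $R$, so that the role of the cross product $s \times t$ is played by $(s \times t) \times (v \times w) = N$) yields the relations $N_i R_j = N_j R_i$ for all $i, j$. Because $R$ is unimodular, choose $a_1, a_2, a_3 \in A$ with $a_1 R_1 + a_2 R_2 + a_3 R_3 = 1$ and set $u := a_1 N_1 + a_2 N_2 + a_3 N_3$. Then $u R_j = \sum_i a_i N_i R_j = \sum_i a_i N_j R_i = N_j$ for each $j$, exactly as in the final line of the proof of Lemma~\ref{lem:cross}, so $N = uR$.

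It remains to see that $u$ is a unit, and this is where the remaining hypotheses enter. Since $[s,t,v]$ is a unit and the entries of $v \times w$ generate the unit ideal, Lemma~\ref{lem:crossnotparallel} shows that $N$ is itself unimodular. Choosing $b_1, b_2, b_3$ with $b_1 N_1 + b_2 N_2 + b_3 N_3 = 1$ and substituting $N_j = u R_j$ gives $1 = \sum_j b_j N_j = u \sum_j b_j R_j$, so $u$ has a right inverse and is therefore a unit. The main obstacle is precisely this last point: over a field one would simply observe that the common perpendicular of $s \times t$ and $v \times w$ is one-dimensional and spanned by $N$, so $R$ is a scalar multiple of $N$ and the scalar is nonzero; over an arbitrary commutative ring this linear-algebra shortcut is unavailable, and one must instead extract the explicit bilinear identities $N_i R_j = N_j R_i$ and solve for $u$ by hand. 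Note that the argument genuinely uses both that $R$ is unimodular (to manufacture $u$ with $N = uR$) and that $N$ is unimodular via Lemma~\ref{lem:crossnotparallel} (to force $u$ to be invertible): the degenerate possibilities $R = 0$ and $v$ parallel to $w$ show that unimodularity of only one of $R$ or $v \times w$ does not suffice.
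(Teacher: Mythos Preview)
Your argument is correct and follows essentially the same route as the paper: reduce via Lemma~\ref{lem:identity} to showing that $N=(s\times t)\times(v\times w)$ and $R$ differ by a unit, invoke Lemma~\ref{lem:crossnotparallel} to make $N$ unimodular, and use the elimination underlying Lemma~\ref{lem:cross} to produce the proportionality. The only cosmetic difference is that the paper cites Lemma~\ref{lem:cross} as a black box (using unimodularity of $N$) to obtain $R=\lambda N$ and then reads off that $\lambda$ is a unit from unimodularity of $R$, whereas you unpack the same bilinear identities $N_iR_j=N_jR_i$ and solve in the opposite direction, first using unimodularity of $R$ to get $N=uR$ and then unimodularity of $N$ to make $u$ invertible; since the relations are symmetric in $N$ and $R$, the two orderings are interchangeable.
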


\begin{proof}
As $[s, t, v]$ is a unit, Lemma~\ref{lem:crossnotparallel} implies that the ideal generated by the entries of $(s \times t) \times (v \times w)$ is the unit ideal. 
The assumptions state that $(s \times t) \cdot R = (v \times w) \cdot R = 0$, so Lemma~\ref{lem:cross} implies that $R = u (s \times t) \times (v \times w)$ for some $u \in A$. The ideal generated by the entries of $R$ is therefore contained in the ideal $(u)$, so $u$ is a unit. The result then follows from Lemma~\ref{lem:identity}. 
\end{proof}

\begin{proof}[Proof of Theorem~\ref{thm:absolutemaster}]
Consider a tile where the white vertices are not equivalent. Suppose that the white vertices correspond to the points $s_i, t_i$ and $s_j, t_j$, respectively, the black vertices correspond to points $P, Q$, and the point $R_k$ is the point associated to the tile. The hypotheses of Proposition~\ref{prop:fundamental} are satisfied, and so there is a unit $u \in A$ such that 
\begin{equation}\label{eq:fundamental}
[s_i, t_i, P] [s_j, t_j, Q] - [s_j, t_j, P] [s_i, t_i, Q] = u[P, Q, R_k].
\end{equation}
In particular, $[P, Q, R_k]$ is equal to $0$ if and only if the left-hand side is equal to $0$. If the tile is not the tile corresponding to the conclusion, then the collinearity conditions state that $[P, Q, R_k] = 0$, and so $[s_i, t_i, P] [s_j, t_j, Q] = [s_j, t_j, P] [s_i, t_i, Q]$. If the white vertices of the tile are equivalent, then $s_i = s_j$ and $t_i = t_j$, so this identity is automatic. 

The orientation of the surface induces an orientation of each tile; this is a direction of the cycles that make up the boundary of the tile, such that if two tiles share an edge, then the orientation of those two edges is opposite. In each tile, there are two edges which point from black vertices to white vertices and two edges which point from white vertices to black vertices. For each tile which does not correspond to the conclusion, we have an equation $[s_i, t_i, P] [s_j, t_j, Q] = [s_j, t_j, P] [s_i, t_i, Q]$, where the terms corresponding to the edges pointing from black vertices to white vertices are on the left. We multiply all of these equations together. 
The nondegeneracy conditions state that $[s_i, t_i, P]$ is a unit for each tile. For each edge which does not appear in the conclusion, the corresponding term will appear on both the left-hand side and the right-hand side. We can therefore cancel each of these, showing that the left-hand side of the instance of \eqref{eq:fundamental} corresponding to the conclusion vanishes. Because $u$ is a unit, this implies the conclusion. 
\end{proof}

\begin{example}\label{ex:pappus}
We now describe how to prove that Pappus's theorem is an absolute incidence theorem using Theorem~\ref{thm:absolutemaster}, following \cite[Proof of Theorem 3.2]{FominPylyavskyy}. To do this, we will prove an absolute incidence theorem involving $15$ points (labeled $P_1, P_2, \dotsc, P_6, R_1, R_2, R_3, s_1, s_2, s_3, t_1, t_2, t_3$, see Figure~\ref{fig:modifiedpappus}) which is a mild strengthening of Pappus's theorem. We impose the nondegeneracy conditions $\{s_i, t_i\}$ for each $i$ as well as $\{s_i, t_i, t_j\}$ and $\{s_i, s_j, t_j\}$ for each $i\not=j$. Let $\ell_i$ be the line spanned by $s_i$ and $t_i$. We apply Theorem~\ref{thm:absolutemaster} to the tiling of the torus in Figure~\ref{fig:pappustiling} with $9$ tiles, $3$ white vertices, and $6$ black vertices, where the tile corresponding to the conclusion has vertices labeled $\ell_1, \ell_3, P_2$, and $P_5$. This implies that if we impose the collinearity conditions $\{s_i, t_i, R_j\}$ for each $j \not= i$ and $\{P_i, P_j, R_k\}$ for each line appearing in Figure~\ref{fig:modifiedpappus} passing through the $P_i$, then the collinearity condition $\{P_2, P_5, R_2\}$ holds. 

In order to use this to deduce the classical statement of Pappus's theorem  (which involves only the $9$ points $P_1, \dotsc, P_6, R_1, R_2, R_3$), we can consider the coordinate projection of the subscheme of the space of $3 \times 15$ matrices to the space of $3 \times 9$ matrices, forgetting the columns labeled by $s_1, s_2, s_3, t_1, t_2, t_3$. As $s_1$ and $t_1$ are general points on the line spanned by $R_2$ and $R_3$ (and similarly for $s_2, t_2$ and $s_3, t_3$), the existence of these columns imposes no conditions on the remaining columns. 
\end{example}

\begin{figure}
		\begin{tikzpicture}
			
			\coordinate (A) at (0,0);
			\coordinate (B) at (2,0);
			\coordinate (C) at (4,0);
			
			\coordinate (D) at (1,2);
			\coordinate (E) at (2.5,2);
			\coordinate (F) at (5,2);
			
			\draw[thick] (A) -- (C); 
			\draw[thick] (D) -- (F);
			
			\draw[thick] (A) -- (E);
			\draw[thick] (A) -- (F);
			\draw[thick] (B) -- (D);
			\draw[thick] (B) -- (F);
			\draw[thick] (C) -- (D);
			\draw[thick] (C) -- (E);
			
			\coordinate (P) at (intersection of A--E and B--D);
			\coordinate (Q) at (intersection of A--F and C--D);
			\coordinate (R) at (intersection of B--F and C--E);
			
			\fill[black] (P) circle (1.5pt) node[label={[xshift=-0.4cm, yshift=-0.4cm]$P_2$}] {};
			\fill[black] (Q) circle (1.5pt) node[label={[xshift=-0.5cm, yshift=-0.3cm]$R_2$}] {};
			\fill[black] (R) circle (1.5pt) node[label={[xshift=0.4cm, yshift=-0.38cm]$P_5$}] {};

			\fill[black] (A) circle (1.5pt) node[below] {$P_3$};
			\fill[black] (B) circle (1.5pt) node[label={[xshift=0.2cm, yshift=-0.6cm]$R_3$}] {};
			\fill[black] (C) circle (1.5pt) node[below] {$P_6$};
			
			\fill[black] (D) circle (1.5pt) node[above] {$P_1$};
			\fill[black] (E) circle (1.5pt) node[above right] {$R_1$};
			\fill[black] (F) circle (1.5pt) node[above] {$P_4$};
			\draw (B) -- ($(E)!3cm!(B)$);
			\draw (B) -- ($(B)!3cm!(E)$);
			\draw (B) -- ($(Q)!2cm!(B)$);
			\draw (B) -- ($(B)!3cm!(Q)$);
			\draw (E) -- ($(Q)!2cm!(E)$);
			\draw (E) -- ($(E)!3cm!(Q)$);
			\coordinate (s2) at ($(E)!3cm!(B)$);
			\coordinate (t2) at ($(B)!3cm!(E)$);
			\coordinate (t1) at ($(B)!3cm!(Q)$);
			\coordinate (s1) at ($(Q)!2cm!(B)$);
			\coordinate (t3) at ($(Q)!2cm!(E)$);
			\coordinate (s3) at ($(E)!3cm!(Q)$);

			\fill[black] (s2) circle (1.5pt) node[below] {$s_2$};
			\fill[black] (t2) circle (1.5pt) node[above] {$t_2$};
			\fill[black] (t1) circle (1.5pt) node[above] {$t_1$};
			\fill[black] (s1) circle (1.5pt) node[left] {$s_1$};
			\fill[black] (t3) circle (1.5pt) node[left] {$t_3$};
			\fill[black] (s3) circle (1.5pt) node[right] {$s_3$};
		\end{tikzpicture}\caption{The three lines added to the Pappus configuration.}\label{fig:modifiedpappus}
\end{figure}
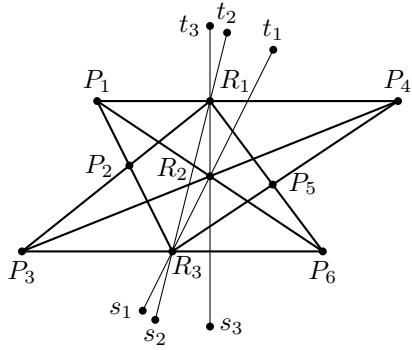

\begin{figure}
\begin{tikzpicture}[
    scale=1.5,
    thick,
    wdot/.style={circle, draw=black, fill=white, inner sep=0pt, minimum size=6pt},
    bdot/.style={circle, draw=black, fill=black, inner sep=0pt, minimum size=6pt}
]

    \node[wdot] (a) at (0, 2) {$\ell_1$};
    \node[wdot] (b) at (-1.5, 0) {$\ell_1$};
    \node[wdot] (c) at (1.5, 0) {$\ell_1$};
    \node[wdot] (d) at (1.5, 1) {$\ell_2$};
    \node[wdot] (e) at (-1.5, 1) {$\ell_2$};
    \node[wdot] (f) at (0, -1) {$\ell_2$};
    \node[wdot] (g) at (0, 0.5)  {$\ell_3$};

    \node[bdot] (P1) at (-0.5, 1) {};
    \node[bdot] (P2) at (0.5, 1) {};
    \node[bdot] (P3) at (1, 0.5) {};
    \node[bdot] (P4) at (0.5, 0) {};
    \node[bdot] (P5) at (-0.5, 0) {};
    \node[bdot] (P6) at (-1, 0.5) {}; 

    \draw (a) -- (d) -- (c) -- (f) -- (b) -- (e) -- (a) [dashed];
    \draw (g) -- (P1) -- (a) -- (P2) -- (g);
    \draw (g) -- (P2) -- (d) -- (P3) -- (g);
    \draw (g) -- (P3) -- (c) -- (P4) -- (g);
    \draw (g) -- (P4) -- (f) -- (P5) -- (g);
    \draw (g) -- (P5) -- (b) -- (P6) -- (g);
    \draw (g) -- (P6) -- (e) -- (P1) -- (g);
    \node[above right] at (0.5, 1) {$P_6$};
    \node[right] at (1, 0.5) {$P_5$};
    \node[left] at (0.5, 0) {$P_2$};
    \node[right] at (-0.5, 0) {$P_3$};
    \node[label={[xshift=-0.4cm, yshift=-0.4cm]$P_4$}] at (-1, 0.5) {};
    \node[above left] at (-0.5, 1) {$P_1$};

\end{tikzpicture}\caption{The tiling used to prove Pappus's theorem. Opposite sides of the hexagon are glued; there are no lines between the outer white vertices.}\label{fig:pappustiling}
\end{figure}
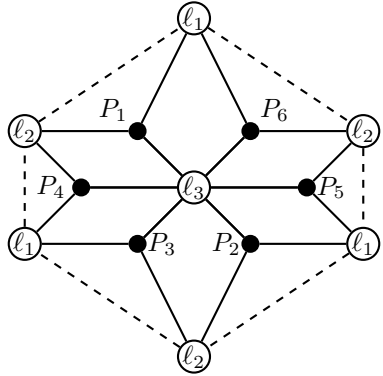

\begin{remark}
Except in some degenerate cases, it is possible to eliminate the $s_i$ and $t_i$ points in the absolute incidence theorems proved by the master theorem, by adding the conditions that if $R_i, R_j,$ and $R_k$ are all vertices associated to tiles involving one of the equivalence classes of white vertices, then $R_i, R_j$, and $R_k$ are collinear. 
\end{remark}

\begin{remark}
While Theorem~\ref{thm:absolutemaster} shows that every instance of the master theorem is an absolute incidence theorem, it does not rule out the possibility of using the master theorem together with case analysis or proof by contradiction to deduce an incidence theorem which is not an absolute incidence theorem. 
See \cite[Section 2.3]{PS} for a discussion. 
\end{remark}

\section{Two proofs of Theorem~\ref{thm:incidencestrong}}\label{sec:incidenceproof}

\begin{proof}[First proof of Theorem~\ref{thm:incidencestrong}]
	We present a computer-aided proof of Theorem~\ref{thm:incidencestrong} using \texttt{OSCAR}~\cite{OSCAR-book}.\footnote{We used \texttt{OSCAR} 1.5.0 in \texttt{julia} 1.10.2. All computations terminated within one minute on our laptop.}
	The proof distinguishes the two cases of whether the points  $\{p_1,p_3,p_6\}$ are collinear or not.
	First assume that the points $\{p_1,p_3,p_6\}$ are not collinear.
	
	Any choice of $13$ points in the projective plane over $\mathbf{k}$ can be represented by a $3\times 13$ matrix with entries in $\mathbf{k}$, where the $i$th column is the vector of projective coordinates of $p_i$.
	We parametrize all such configurations by a matrix $A$ where each entry is a new variable in the polynomial ring over $\mathbb{Z}$, as these variables can then be specialized to any field.

	Using the nondegeneracy and collinearity conditions, we can simplify this matrix:
	\begin{itemize}
		\item The assumption that the points $\{p_1,p_3,p_6\}$ are not collinear allows us to assume that the minor on these columns is the identity matrix.
		\item The assumption that the points $\{p_1,p_2,p_3\}$ and $\{p_1,p_3,p_{13}\}$ are collinear now implies  $A_{3,2}=A_{3,13}=0$. 
		\item Similarly, the assumption that $\{p_3,p_5,p_6\}$ and $\{p_1,p_6,p_{9}\}$ are collinear now implies that $A_{1,5}=A_{2,9}=0$. 
		\item If the entry $A_{1,2}$ is zero, then the second and the third columns would be parallel, contradicting our assumption that all points are pairwise distinct. Hence we obtain $A_{1,2}\neq 0$. After scaling the second column, we can therefore assume that $A_{1,2}=1$.
		By the same argument, we can assume that $A_{2,5}=A_{1,9}=A_{1,13}=1$.
		\item Similarly, we obtain that $A_{2,2}\neq 0$.
		Scaling a row of $A$ by a nonzero scalar in $\mathbf{k}$ does not change the (non)incidence relations between the chosen points.
		Therefore, we can scale the second column so that $A_{2,2}=1$. After rescaling the third and fifth columns, we still have $A_{2,3}=A_{2,5}=1$.
		Using the analogous argument on the fifth column yields $A_{3,5}=1$.
	\end{itemize}
	Hence we can assume that 
	\[
	A = 
	\begin{pmatrix}
		1  & 1    	 &   0   & x_1   &   0     &  0    &   x_4        &  x_{7} &     1           &  x_{11}    &  x_{14}  &  x_{17} &  1          \\
		0  & 1       &  1    & x_2   &   1      &  0    &  x_5        & x_{8}  &    0            &  x_{12}   &   x_{15} &  x_{18}  & x_{20}\\
		0  & 0		 &   0   & x_3   &   1      & 1     &  x_{6}     & x_{9}  &  x_{10}  &  x_{13}   &   x_{16}  & x_{19}  &   0
	\end{pmatrix}.
	\]
	Let $I_{\text{minors}}$ be the ideal in $\mathbb{Z}[x_1,\dots,x_{20}]$ of the minors of $A$ corresponding to the remaining $14$ nontrivial collinearity conditions.
	The Gr\"obner basis of  $I_{\text{minors}}$ computed in \texttt{OSCAR} has $145$ generators:
	\inputminted[firstline=1, lastline=17]{jlcon}{incidences.jlcon}
	
	Now we saturate the ideal $I_{\text{minors}}$ with respect to the $\binom{13}{2}$ ideals of the three $2\times 2$ minors of every pair of columns of $A$. This removes the components where two columns are parallel. 
	Call the resulting ideal $I_{\text{sat}}$; it has a Gr\"obner basis with $203$ generators.
	The Gr\"obner basis of the radical $\sqrt{I_{\text{sat}}}$ has $75$ generators:
	\inputminted[firstline=19, lastline=30]{jlcon}{incidences.jlcon}
	
	Let $A[11,12,13]$ be the minor of $A$ on the last three columns.
	Another \texttt{OSCAR} computation yields
	\[
	\det(A[11,12,13]) \notin I_{\text{sat}} \text{ and } \det(A[11,12,13]) \in \sqrt{I_{\text{sat}}}:
	\]
	\inputminted[firstline=32, lastline=36]{jlcon}{incidences.jlcon}
	This confirms that under the assumption that the points  $\{p_1,p_3,p_6\}$ are not collinear, the given incidence relations in Theorem~\ref{thm:incidencestrong} form an incidence theorem which is not an absolute incidence theorem.
	
	For the second case, we assume that the points $\{p_1,p_3,p_6\}$ lie on a line $L$.
	We claim that this already implies that~$L$ contains all $13$ points $p_1,\dots,p_{13}$, so the conclusion of the incidence theorem holds trivially:
	Since, by assumption, the points $\{p_1, p_2, p_3\},   \{p_3, p_5, p_6\},  \{p_1, p_6, p_9\},\{p_1, p_3, p_{13}\}$ are collinear, the line $L$ also contains the points $p_2$, $p_5$, $p_9$ and $p_{13}$.
	Similarly, since the points $\{p_1, p_4, p_5\},  \{p_6, p_{7}, p_{13}\}, \{p_2, p_5, p_8\}$ are collinear, the line~$L$ contains the points $p_4$, $ p_7$ and $p_8$.
	Lastly, since the points $\{p_1, p_7, p_{10}\},  \{p_5, p_{7}, p_{11}\}, \{p_1, p_8, p_{12}\}$ are collinear, the line~$L$ contains the remaining points $p_{10}$,  $p_{11}$ and $p_{12}$.
\end{proof}

\begin{proof}[Second proof of Theorem~\ref{thm:incidencestrong}]
First consider the case when the following triples of points are \emph{not} collinear:
\begin{equation}\begin{split}\label{eq:nondegen}
&\{p_1,p_3,p_4\}, \{p_1,p_3,p_6\}, \{p_1,p_4,p_6\}, \{p_3,p_4,p_6\}, \{p_3,p_6,p_i\}_{7\le i\le 13}.
\end{split}\end{equation}
Using a projective transformation, because none of $\{p_1,p_3,p_4\}, \{p_1,p_3,p_6\}, \{p_1,p_4,p_6\}, \{p_3,p_4,p_6\}$ are collinear, we can assume that $p_1 = [1 : 0 : 0]$, $p_3 = [0 : 1 : 0]$, $p_4 = [1 : 1 : 1]$, and $p_6 = [0 : 0 : 1]$. The assumption that, for each $i \ge 7$, $\{p_3, p_6, p_i\}$ is not collinear means that the first coordinate of $p_i$ is nonzero for each $i \ge 7$. We can then rescale each of these so that the first coordinate is $1$.  Each collinearity involving two elements of $\{p_1, p_3, p_6\}$ implies that some coordinate is $0$.
The collinearities $\{p_1, p_4, p_5\}, \{p_3,p_4,p_7\}$, and $\{p_2, p_4, p_6\}$ imply that the second coordinate of $p_2$, second coordinate of $p_5$, and third coordinate of $p_7$ are $1$. 
After rescaling, we deduce that the $3 \times 13$ matrix representing the point configuration looks like the following
$$A = \begin{pmatrix} 1 & 1 & 0 & 1 & 0 & 0 & 1 & 1 & 1 & 1 & 1 & 1 & 1 \\ 
0 & 1 & 1 & 1 & 1 & 0 & ? & ? & 0 & ? & ? & ? & ? \\ 
0 & 0 & 0 & 1 & 1 & 1 & 1 & ? & ? & ? & ? & ? & 0 \end{pmatrix}.$$
Set $x = A_{2,7}$, $y = A_{3,8}$, $z = A_{3,9}$, and $w = A_{3,12}$. Using the collinearities 
$$\{p_1, p_7, p_{10}\}, \{p_2, p_5, p_8\}\{p_3, p_8, p_{10}\}, \{p_5, p_7, p_{11}\}, \{p_6, p_{7}, p_{13}\},\{p_6, p_8, p_{11}\}, \{p_6, p_{10}, p_{12}\}, $$
one easily deduces the following relations between the entries of $A$. 
\begin{equation}\label{eq:easy}
A = \begin{pmatrix} 1 & 1 & 0 & 1 & 0 & 0 & 1 & 1 & 1 & 1 & 1 & 1 & 1 \\ 
0 & 1 & 1 & 1 & 1 & 0 & x & y+1 & 0 & xy & y+1 & xy & x \\ 
0 & 0 & 0 & 1 & 1 & 1 & 1 & y & z & y & y + 2 - x & w & 0 \end{pmatrix}.
\end{equation}
For example, the collinearity $\{p_3, p_8, p_{10}\}$ implies that $A_{3,10} = y$, and the collinearity $\{p_1, p_7, p_{10}\}$ implies that $A_{2,10} = xy$. There are now only four collinearities which impose nontrivial conditions on $A$. They are:
\begin{equation}\label{eq:21011}
\{p_2, p_{10}, p_{11}\}: xy^2 + 2xy - x^2y - y^2 - y + x - 2 = 0,
\end{equation}
\begin{equation}\label{eq:1812}
\{p_1, p_8, p_{12}\}: yw + w = xy^2,
\end{equation}
\begin{equation}\label{eq:4910}
\{p_4, p_9, p_{10}\}: xy + z - xyz - y = 0, \text{ and }
\end{equation}
\begin{equation}\label{eq:7912}
\{p_7, p_9, p_{12}\}: x(y + z - yz - w) = 0.
\end{equation}
Note that $x \not=0$, as $p_1$ is distinct from $p_{13}$. From \eqref{eq:7912},  we deduce that $w = y + z - yz$. Substituting  into \eqref{eq:1812} gives 
\begin{equation}\label{eq:solvez}
z (y^2 - 1) = y^2 + y - xy^2.
\end{equation}
If $y = -1$, then the equation corresponding to \eqref{eq:1812} implies that $x=0$, which is impossible. If $y=1$, then \eqref{eq:21011} becomes $-(x - 2)^2 = 0$. But if $x = 2$ and $y=1$, then $p_7 = p_8$. We can therefore assume that $y^2 - 1 \not=0$ and solve \eqref{eq:solvez} for $z$. We substitute this into \eqref{eq:4910} and clear denominators to obtain the equation
\begin{equation}\label{eq:xy}
y(2 - x + y - 2xy - y^2 + x^2y^2) = 0.
\end{equation}
We may assume that $y \not=0$, as otherwise we would have $p_2 = p_8$. We deduce that $2 - x + y - 2xy - y^2 + x^2y^2 = 0$. Adding this to \eqref{eq:21011}, we obtain
\begin{equation}\label{eq:simplexy}
y(-2y + x^2y + xy - x^2) = 0.
\end{equation}
As above, we may assume that $y\not=0$, so $x^2 = y(x+2)(x-1)$. If $x = 1$, then $p_4 = p_7$, and if $x = -2$, then $p_8 = p_{11}$. We can then solve for $y$ and substitute into \eqref{eq:21011}. After clearing denominators, we obtain
$$x^6 - 3 x^5 - 2 x^4 + 12 x^3 - 4 x^2 - 12 x + 8 = (x - 2) (x - 1) (x^2 - 2)^2 = 0.$$
If $x = 2$, then \eqref{eq:21011} implies that $y = 0$ or $y=1$, which contradicts that $p_7 \not= p_8$ or that $p_2 \not= p_8$. As above, $x = 1$ is impossible. Because $\mathbf{k}$ is a field, we deduce that $x = \pm \sqrt{2}$; if $\mathbf{k}$ has characteristic $2$, this means that $x=0$. Then \eqref{eq:simplexy} yields that $y = \pm \sqrt{2}$.  From \eqref{eq:4910}, we deduce that $z = 2 \mp \sqrt{2}$, so \eqref{eq:7912} implies that $w = 4 \mp 2\sqrt{2}$. Plugging this into \eqref{eq:easy}, we deduce the following form of $A$:
\begin{equation}\label{eq:formA}
A = \begin{pmatrix} 1 & 1 & 0 & 1 & 0 & 0 & 1 & 1 & 1 & 1 & 1 & 1 & 1 \\ 
0 & 1 & 1 & 1 & 1 & 0 & x & x+1 & 0 & 2 & x+1 & 2 & x \\ 
0 & 0 & 0 & 1 & 1 & 1 & 1 & x & 2-x & x & 2 & 4 - 2x & 0 \end{pmatrix},
\end{equation}
where $x = \pm \sqrt{2}$. We see that the minor with columns $11, 12, 13$ is $0$, so $p_{11}, p_{12},$ and $p_{13}$ are collinear. 

If any one of the $11$ triples in \eqref{eq:nondegen} is collinear, then one can check that the collinearity conditions force all $13$ points to lie on a line. The argument works analogously as in the second part of the first proof above. In particular, $\{p_{11}, p_{12}, p_{13}\}$ is collinear, so the incidence theorem holds. 

To check that this is not an absolute incidence theorem, it suffices to check that the conclusion of Theorem~\ref{thm:incidencestrong} does not hold over the ring $B = \mathbb{Q}[x, \varepsilon]/(\varepsilon^2, x^2 - 2 - \varepsilon/4)$. Consider the matrix from \eqref{eq:matrix}.
It is easy to check that no column of this matrix is a scalar multiple of another.
A somewhat tedious computation shows that the determinant of any $3 \times 3$ minor  corresponding to a triple of collinear points in Theorem~\ref{thm:incidencestrong} vanishes. It turns out that there are only five conditions to check which are not obvious: those corresponding to $\{p_1, p_7, p_{10}\}, \{p_1, p_8, p_{12}\}, \{p_2, p_{10}, p_{11}\}, \{p_4, p_9, p_{10}\}$, and $\{p_7, p_9, p_{12}\}$. 
However, the determinant of the minor with columns $11, 12, 13$ is $2 \varepsilon/7 - \varepsilon x/14 \not= 0$.
\end{proof}

\begin{remark}\label{rem:Nrealizable}
Let $N$ be the matroid obtained by relaxing $\{{11}, {12}, {13}\}$ in $M$, i.e., the non-bases of $N$ are exactly the collinearity conditions listed in Theorem~\ref{thm:incidencestrong}. The content of a slight weakening of Theorem~\ref{thm:incidencestrong} is that $N$ is not realizable over any field. We do not know if $N$ is representable over a skew field, is multilinear, or is algebraic.
It would be interesting but computationally challenging to check whether the tools recently introduced in~\cite{skewlinear} could certify that $N$ is not skew-linear, for instance by proving that $N$ does not admit a tensor product with $M(K_4)$.
 We have computed the foundation of $N$ (in the sense of \cite{BL}) using the program \cite{M2pasture}. The foundation $F_N$ can be described  as $\mathbb{F}_1^\pm(x_1,\dots,x_{22})$ modulo $451$ hexagons which generate the nullset of the pasture.
\end{remark}

\begin{remark}\label{rem:powerseries}
One way to attempt to produce a counterexample to Theorem~\ref{thm:incidencestrong} would be to choose a point configuration satisfying the hypothesis (such as \eqref{eq:formA}, with $x = \sqrt{2}$) and construct $39$ convergent power series $f_{1,1}(t), f_{1,2}(t), \dotsc, f_{3,13}(t)$ such that, if we consider the matrix $A(t)$ whose $(i,j)$ entry is $f_{i,j}(t)$, then $A(0) = A$ and $A(t)$ satisfies the collinearity conditions of Theorem~\ref{thm:incidencestrong}, but $[11, 12, 13]$ is nonzero. We can try to construct the power series term-by-term; the constant terms are determined by the condition that $A(0) = A$. It is possible to choose the linear terms of the $f_{i,j}$ such that the linear terms of the minors corresponding to the collinearity conditions in Theorem~\ref{thm:incidencestrong} vanish, but the linear term of $[11, 12, 13]$ is nonzero. However, it is not possible to then choose the quadratic terms of the power series so that the quadratic terms of the minors corresponding to the collinearity conditions are $0$. In this sense, the configuration in \eqref{eq:formA} admits an infinitesimal (or first order) motion that cannot be integrated. 
\end{remark}

\printbibliography

\end{document}